\documentclass[a4paper,11pt]{article}

\usepackage[margin=1in]{geometry}
\usepackage{setspace}
\usepackage{amsmath}
\usepackage{amssymb}
\usepackage{amsthm}
\usepackage{xcolor}
\usepackage{hyperref}

\newtheorem{theorem}{Theorem}[section]
\newtheorem{proposition}[theorem]{Proposition}
\newtheorem{corollary}[theorem]{Corollary}
\newtheorem{lemma}[theorem]{Lemma}
\newtheorem{definition}[theorem]{Definition}

\hypersetup{
    hidelinks,
    pdftitle={Non-K\"ahler Critical Hermitian Metrics of the Dinew--Popovici Functional},
    pdfauthor={Hanwen Liu}
}

\begin{document}

\title{\texorpdfstring{\textbf{Non-K\"ahler Critical Hermitian Metrics of the Dinew--Popovici Functional}}{Non-K\"ahler Critical Hermitian Metrics of the Dinew--Popovici Functional}}
\author{Hanwen Liu\\
{\small Mathematics Institute, University of Warwick}\\
{\small \href{mailto:hanwen.liu@warwick.ac.uk}{\texttt{hanwen.liu@warwick.ac.uk}}}}
\date{}

\maketitle

\begin{abstract}
The Dinew--Popovici functional is an energy functional for Hermitian symplectic metrics in a fixed Aeppli cohomology class. Its vanishing characterizes the K\"ahler metrics in that class, providing a variational approach to K\"ahler geometry. Dinew and Popovici proved that every critical point is K\"ahler in complex dimension three. We give a negative answer to Erfan Soheil's question about higher dimensions by constructing non-K\"ahler critical metrics on products of two K\"ahler surfaces $(S_1,\eta_1)$ and $(S_2,\eta_2)$. These metrics are critical under every variation on the product 4-fold. We characterize the critical product metrics and prove that non-K\"ahler critical products exist in the Aeppli class $[\eta_1+\eta_2]_A$ precisely when the canonical bundles of the two surface factors are smoothly trivial. As a by-product, we develop a geometric flow driven by the torsion tensor and converging to the fixed K\"ahler background when this background metric has nonnegative holomorphic bisectional curvature.
\end{abstract}

\begin{center}
\textbf{Keywords:} Hermitian symplectic metric; Aeppli cohomology; geometric flow; K\"ahler surface.

\textbf{Mathematics Subject Classification:} Primary 53C55; Secondary 32J15, 32Q15, 58E11.
\end{center}

\tableofcontents
\onehalfspacing
\raggedbottom

\section{Introduction and Background}\label{section_introduction}

Dinew and Popovici introduced an energy functional on the Hermitian symplectic metrics in a fixed Aeppli cohomology class \cite{DinewPopovici}. For a Hermitian symplectic form $\omega$ on a compact complex manifold $X$, let $\rho_\omega$ be the $(2,0)$-form of least $L^2(\omega)$ norm such that $\omega+\rho_\omega+\bar\rho_\omega$ is closed. Their functional is defined by $\mathcal{F}_X(\omega)=\|\rho_\omega\|_{L^2(\omega)}^2$. The equality $\mathcal{F}_X(\omega)=0$ holds precisely when $\omega$ is K\"ahler. Thus, the functional provides a variational approach to finding K\"ahler metrics in a prescribed Aeppli class.

In complex dimension three, Dinew and Popovici proved that every critical point of $\mathcal{F}_X$ is K\"ahler \cite{DinewPopovici}. Consequently, in that dimension, criticality already forces the energy to vanish. Erfan Soheil subsequently studied the functional in higher dimensions and under holomorphic deformations, and asked whether its critical points are still exactly the K\"ahler metrics in the given Aeppli class when the complex dimension exceeds three \cite[Question~1.5]{Soheil}.

In this article, we give a negative answer to Soheil's question in complex dimension four. Our counterexamples are non-K\"ahler Hermitian metrics on products of two compact K\"ahler surfaces $(S_1,\eta_1)$ and $(S_2,\eta_2)$. On the product K\"ahler background $(X,\eta)=(S_1\times S_2,\eta_1+\eta_2)$, we construct positive forms $\omega=\omega_1+\omega_2$, with $\omega_i\in[\eta_i]_A$, which are critical for $\mathcal{F}_X$ under every Aeppli variation. In particular, criticality includes variations which do not preserve the product decomposition. These examples have positive energy, although their Aeppli class contains the zero-energy K\"ahler representative $\eta$.

The counterexamples arise from a characterization of the full critical points which are product metrics. Let $\rho_i$ be the canonical completion of $\omega_i$. Theorem~\ref{critical_products} shows that a non-K\"ahler product $\omega=\omega_1+\omega_2$ is critical in the full Aeppli class $[\eta]_A$ if and only if it holds simultaneously that
$$
\begin{cases}
2\rho_1\wedge\bar\rho_1=\omega_1^2,\\
2\rho_2\wedge\bar\rho_2=\omega_2^2.
\end{cases}
$$
Each equality is pointwise on the corresponding surface. Theorem~\ref{existence} then proves that such a critical product exists in the prescribed class precisely when the canonical bundles $K_{S_1}$ and $K_{S_2}$ are smoothly trivial, equivalently, when $c_1(K_{S_i})=0$ in $H^2(S_i,\mathbb Z)$ for each $i\in\{1,2\}$. In particular, the construction applies to products of K3 surfaces and complex 2-tori for every choice of the background K\"ahler forms.

The existence proof begins with a nowhere-zero $\partial$-exact $(2,0)$-form on each surface. Buchdahl's positivity theorem \cite{Buchdahl} and the Hermitian Monge--Amp\`ere theorem \cite{TosattiWeinkove} then produce a positive Hermitian form with the required volume density. The pointwise equalities above turn the full first variation of $\mathcal{F}_X$ into the integral of an exact form. This establishes criticality under all admissible variations and completes the construction of the counterexamples.

We also study the geometry of these critical metrics. Their Hessians have infinite positive and negative indices, even under variations preserving the exterior torsion form $d\omega$ (Corollary~\ref{product_infinite_index}). The torsion flow~\eqref{torsion_flow} selects a negative Hessian direction whose value is expressed explicitly through the dissipation of the two surface completion energies (Theorem~\ref{flow_instability}). The associated evolution theory is developed in Appendix~\ref{appendix_flow}: on a compact K\"ahler background $(X,\eta)$ with nonnegative holomorphic bisectional curvature, the flow exists for all time and converges smoothly to $\eta$ for every smooth positive Hermitian initial form and every positive damping constant.

The paper is structured as follows: Section~\ref{section_introduction} is this introduction. Section~\ref{section_products} defines the Dinew--Popovici functional and classifies the full critical points which are product metrics; Section~\ref{section_existence} proves the existence criterion for the non-K\"ahler critical products; Section~\ref{section_flow} proves the flow Hessian formula and the corresponding instability results; and Section~\ref{section_potentials} studies the first and second variations of $\mathcal{F}_X$ along potential directions on K\"ahler 4-folds $(X,\eta)$. The final main section summarizes the critical-point and instability results. Appendix~\ref{appendix_flow} proves the long-term existence and uniqueness results for the torsion flow, including the convergence theorem under nonnegative holomorphic bisectional curvature.

\section{Critical Metrics on Products of Surfaces}\label{section_products}

Throughout this article, all manifolds are smooth, compact, connected and without boundary. A symbol such as $X$ denotes a complex manifold. When a K\"ahler form is prescribed, we write $(X,\eta)$ for the complex manifold together with that form. We identify a Hermitian metric with its positive real fundamental $(1,1)$-form, and use $\omega$ for a Hermitian form which is allowed to vary. Thus, the form $\eta$ in a K\"ahler background $(X,\eta)$ is fixed even when the Hermitian form $\omega$ on $X$ varies.

All forms and functions are smooth unless another regularity is specified. We use the convention $d^c=i(\bar\partial-\partial)$, so that $dd^c=2i\partial\bar\partial$. Exterior powers always mean wedge powers. For a Hermitian $(1,1)$-form $\omega$ on a complex manifold $X$ of dimension $n$, we write $\mu_\omega=\omega^n/n!$ for the Riemannian volume measure and $L^2(\omega):=L^2(X;\mu_\omega)$. For differential forms, this notation includes the pointwise inner product induced by $\omega$ on the exterior powers of the cotangent bundle. In particular, for differential forms $\alpha,\beta$ of the same degree, we have that
$$
\langle\alpha,\beta\rangle_{L^2(\omega)}
=\int_X\langle\alpha,\beta\rangle_\omega\,\mu_\omega.
$$
The Hermitian inner product is taken to be complex-linear in the first argument. We suppress pullbacks from the factors of a product whenever the source factor of each form has been specified.

\subsection{The functional and its variations}

Let $(X,\eta)$ be a K\"ahler manifold of complex dimension $n\geq2$. We write $\Omega^{p,q}(X)$ for the smooth differential forms of bidegree $(p,q)$ on the complex manifold $X$, and $\Omega^{1,1}(X,\mathbb R)$ for the real $(1,1)$-forms. The real Aeppli cohomology of $X$ in bidegree $(1,1)$ is
$$
H_A^{1,1}(X,\mathbb R)
=\frac{\{\omega\in\Omega^{1,1}(X,\mathbb R):\partial\bar\partial\omega=0\}}
{\{\partial\bar v+\bar\partial v:v\in\Omega^{1,0}(X)\}}.
$$
A real $(1,1)$-form $\omega$ is said to be pluriclosed if $dd^c\omega=0$. For a pluriclosed form $\omega$, we denote its Aeppli cohomology class by $[\omega]_A$. For the fixed K\"ahler form $\eta$, the positive representatives of its Aeppli class are precisely the forms
$$
\omega=\eta+\partial\bar v+\bar\partial v>0,
$$
where $v$ is a smooth $(1,0)$-form on $X$. The inequality means that $\omega$ is positive definite as a Hermitian form at every point of $X$.

\begin{definition}\label{completion_definition}
Let $\omega$ be a Hermitian form on the complex manifold $X$. We say that $\omega$ is Hermitian symplectic if there exists a $(2,0)$-form $\rho$ on $X$ such that $\partial\rho=0$ and $\bar\partial\rho=-\partial\omega$. We call such a $2$-form $\rho$ a completion of $\omega$. Equivalently, $\rho$ is a completion of $\omega$ when the real 2-form $\omega+\rho+\bar\rho$ is closed. Among all completions of the fixed form $\omega$, the completion of least $L^2(\omega)$ norm is called the canonical completion of $\omega$ and is denoted by $\rho_\omega$.
\end{definition}

Every Hermitian symplectic form $\omega$ is pluriclosed, because applying $\bar\partial$ to the completion equation $\bar\partial\rho=-\partial\omega$ gives $\partial\bar\partial\omega=0$. Every positive form $\omega=\eta+\partial\bar v+\bar\partial v$ in $[\eta]_A$ is Hermitian symplectic: the form $\partial v$ completes $\omega$, and the corresponding closed real 2-form is $\eta+d(v+\bar v)$. More generally, if $\omega$ is Hermitian symplectic with completion $\rho$, then every positive form $\omega+\partial\bar u+\bar\partial u$ in $[\omega]_A$ is Hermitian symplectic with completion $\rho+\partial u$.

For a fixed Hermitian symplectic form $\omega$, the difference of any two completions of $\omega$ is a closed holomorphic 2-form on $X$. Conversely, adding a closed holomorphic 2-form to a completion of $\omega$ gives another completion of $\omega$. Since $(X,\eta)$ is K\"ahler, every holomorphic form on $X$ is closed. Thus, the set of completions of $\omega$ is an affine space whose translation space is the finite-dimensional vector space of holomorphic 2-forms on $X$. Minimizing the positive quadratic form $\|\rho\|_{L^2(\omega)}^2$ on this affine space gives a unique canonical completion. The defining orthogonality condition is
$$
\langle\rho_\omega,h\rangle_{L^2(\omega)}=0
$$
for every holomorphic 2-form $h$ on $X$.

For a Hermitian symplectic metric $\omega$ on the complex manifold $X$, the Dinew--Popovici energy of $\omega$ is
\begin{equation}\label{functional}
\mathcal{F}_X(\omega)=\|\rho_\omega\|_{L^2(\omega)}^2
=\frac{1}{(n-2)!}\int_X\rho_\omega\wedge\bar\rho_\omega\wedge\omega^{n-2}.
\end{equation}
For each fixed Aeppli class containing a Hermitian symplectic metric, the Dinew--Popovici functional on that class assigns the value~\eqref{functional} to each positive representative of the class. Unless the class is specified otherwise, the class considered below is $[\eta]_A$. The notation $\mathcal{F}_X$ retains the complex manifold $X$, but does not include the background form $\eta$, because the value $\mathcal{F}_X(\omega)$ depends on $\omega$ and the complex structure of $X$, not on the auxiliary choice of $\eta$.

An admissible variation at a positive representative $\omega$ of a fixed Aeppli class is a real $(1,1)$-form
$\psi=\partial\bar u+\bar\partial u$,
where $u$ is an arbitrary smooth $(1,0)$-form on $X$. A real $(1,1)$-form expressible in this way is called Aeppli-exact. For sufficiently small real $t$, the form $\omega+t\psi$ is positive and lies in the same Aeppli class as $\omega$. We write
$$
(D\mathcal{F}_X)_\omega\psi
=\left.\frac{d}{dt}\mathcal{F}_X(\omega+t\psi)\right|_{t=0}.
$$
A form $\omega$ is a critical point of $\mathcal{F}_X$ in the full Aeppli class $[\omega]_A$ if $(D\mathcal{F}_X)_\omega\psi=0$ for every admissible variation $\psi$. The word full specifies that no product, symmetry, or potential restriction is imposed on the $(1,0)$-form $u$ defining $\psi$.

We denote the real affine Hessian of $\mathcal{F}_X$ at $\omega$ by $H_\omega=D^2\mathcal{F}_X|_\omega$. For two admissible variations $\psi_1,\psi_2$, this means
$$
H_\omega(\psi_1,\psi_2)
=\left.\frac{\partial^2}{\partial t_1\partial t_2}
\mathcal{F}_X(\omega+t_1\psi_1+t_2\psi_2)\right|_{t_1=t_2=0}.
$$
The adjective affine refers to the expression of the fixed Aeppli class as a translate of the vector space of Aeppli-exact real $(1,1)$-forms. If $\omega$ is a critical point and $\omega(t)$ is any smooth curve of positive representatives in $[\omega]_A$ with $\omega(0)=\omega$, then the usual second derivative along the curve is precisely
$$
\left.\frac{d^2}{dt^2}\mathcal{F}_X(\omega(t))\right|_{t=0}
=H_\omega(\dot\omega(0),\dot\omega(0)).
$$
Indeed, the additional term in the chain rule is $(D\mathcal{F}_X)_\omega\ddot\omega(0)$, which vanishes by the criticality of $\omega$ in its full Aeppli class.

We shall use the smooth dependence of $\rho_\omega$ on the positive Hermitian form $\omega$. Let $\bar\partial_\eta^*$ be the adjoint of $\bar\partial$ in $L^2(\eta)$, and let $G_\eta$ be the inverse of the Dolbeault Laplacian $\bar\partial\bar\partial_\eta^*+\bar\partial_\eta^*\bar\partial$ on the orthogonal complement of its kernel, extended by zero on its kernel. Since $-\partial\omega$ is $\bar\partial$-exact, Hodge decomposition and the K\"ahler identities show that the form
$$
\sigma_\omega=-\bar\partial_\eta^*G_\eta\partial\omega
$$
satisfies $\partial\sigma_\omega=0$ and $\bar\partial\sigma_\omega=-\partial\omega$. The canonical completion $\rho_\omega$ is obtained by subtracting from $\sigma_\omega$ its $L^2(\omega)$ orthogonal projection onto the finite-dimensional space of holomorphic 2-forms. In a fixed basis of that space, the coefficients of this projection solve a positive definite linear system whose matrix and right-hand side depend smoothly on $\omega$. This proves the required smooth dependence.

For the first variation of $\mathcal{F}_X$, any smooth family of completions agreeing with the canonical completion at the initial time may be used. Let $\omega(t)$ be an admissible smooth curve with $\omega(0)=\omega$, and let $\rho(t)$ be any smooth family of completions of $\omega(t)$ satisfying $\rho(0)=\rho_\omega$. At every $t$, the difference $\rho(t)-\rho_{\omega(t)}$ is a holomorphic 2-form. Denote the derivative of this difference at $t=0$ by $h$. The form $h$ is holomorphic, and $\rho(0)-\rho_{\omega(0)}=0$. Thus, the difference between the first derivatives of the two squared norms is $2\operatorname{Re}\langle h,\rho_\omega\rangle_{L^2(\omega)}$. The orthogonality defining $\rho_\omega$ makes this real number zero. Therefore, the chosen family $\rho(t)$ and the canonical family $\rho_{\omega(t)}$ give the same first derivative of the energy at time $t=0$.

In complex dimension four, fix a Hermitian symplectic form $\omega$, write $\rho=\rho_\omega$, and let $\psi=\partial\bar u+\bar\partial u$ be an admissible variation. The family $\rho+t\partial u$ completes $\omega+t\psi$ and equals the canonical completion at $t=0$. Differentiating~\eqref{functional} with this family gives
\begin{equation}\label{first_variation}
(D\mathcal{F}_X)_\omega\psi
=\int_X\left(\operatorname{Re}\left(\partial u\wedge\bar\rho\wedge\omega^2\right)
+\rho\wedge\bar\rho\wedge\psi\wedge\omega\right).
\end{equation}
The first term in~\eqref{first_variation} differentiates the two completion factors, and the second term differentiates the factor $\omega^2/2$ in the energy density.

\subsection{Surface completions and product energy}

Let $(S_i,\eta_i)$ be K\"ahler surfaces for $i\in\{1,2\}$. For each index $i$, choose a smooth $(1,0)$-form $v_i$ on $S_i$ such that
\begin{equation}\label{surface_metrics}
\omega_i=\eta_i+\partial\bar v_i+\bar\partial v_i>0.
\end{equation}
We use the notation
\begin{equation}\label{surface_data}
\rho_i=\partial v_i,\qquad
\mu_i=\frac{1}{2}\omega_i^2,\qquad
\nu_i=\rho_i\wedge\bar\rho_i.
\end{equation}
Thus, $\mu_i$ is the positive volume form of the Hermitian metric $\omega_i$, and $\nu_i$ is a nonnegative top-degree form on the same complex surface $S_i$. We set $$\Theta_i=\omega_i+\rho_i+\bar\rho_i=\eta_i+d(v_i+\bar v_i).$$ The real 2-form $\Theta_i$ is closed and has the same de Rham cohomology class as $\eta_i$.

On the product complex manifold $X=S_1\times S_2$, we use the fixed K\"ahler form $\eta=\eta_1+\eta_2$ and the Hermitian form $\omega=\omega_1+\omega_2$. We also write $\rho=\rho_1+\rho_2$. The product metric $\omega$ lies in the Aeppli class $[\eta]_A$ because each surface metric $\omega_i$ lies in $[\eta_i]_A$.

\begin{lemma}\label{surface_completion}
For each K\"ahler surface $(S_i,\eta_i)$ and Hermitian form $\omega_i$ in~\eqref{surface_metrics}, the form $\rho_i=\partial v_i$ is the canonical completion of $\omega_i$. If $\hat\omega_i$ is another positive representative of $[\eta_i]_A$ satisfying $d\hat\omega_i=d\omega_i$, then $\rho_{\hat\omega_i}=\rho_i$. On the product background $(X,\eta)=(S_1\times S_2,\eta_1+\eta_2)$, the canonical completion of $\omega=\omega_1+\omega_2$ is $\rho=\rho_1+\rho_2$.
\end{lemma}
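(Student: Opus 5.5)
We first check that $\rho_i=\partial v_i$ is a completion of $\omega_i$; this was already observed after Definition~\ref{completion_definition}, since $\partial\rho_i=0$ and $\bar\partial\rho_i=\bar\partial\partial v_i=-\partial\bar\partial v_i=-\partial\omega_i$. By the orthogonality characterization of the canonical completion, it suffices to show $\langle \partial v_i,h\rangle_{L^2(\omega_i)}=0$ for every holomorphic 2-form $h$ on $S_i$.

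On a surface the $L^2(\omega_i)$ inner product of two $(2,0)$-forms does not depend on the metric: pointwise $\langle\alpha,\beta\rangle_{\omega}\,\mu_\omega$ is a fixed multiple of $\alpha\wedge\bar\beta$ (this is the $n=2$ case of formula~\eqref{functional}, polarized). Hence
$$
\langle\partial v_i,h\rangle_{L^2(\omega_i)}=c\int_{S_i}\partial v_i\wedge\bar h=c\int_{S_i}d\bigl(v_i\wedge\bar h\bigr)=0,
$$
where we use that $\bar h$ is closed (holomorphic forms on compact K\"ahler manifolds are closed) and that $\bar\partial v_i\wedge\bar h$ has bidegree $(1,3)$, so it vanishes on a surface. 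Stokes' theorem gives zero. This proves the first claim, and it also shows the canonical completion on a surface is the unique completion orthogonal, in the metric-independent pairing $\int\alpha\wedge\bar\beta$, to all holomorphic 2-forms.

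For the second claim, write $\hat\omega_i=\eta_i+\partial\bar{\hat v}_i+\bar\partial\hat v_i$. Its canonical completion is $\partial\hat v_i$ by the first part. The equality $d\hat\omega_i=d\omega_i$ gives $\partial\hat\omega_i=\partial\omega_i$, so $\partial\hat v_i-\partial v_i$ is a completion-difference, i.e.\ $\bar\partial$-closed and $\partial$-closed, hence a holomorphic 2-form $h$. But it is also $\partial$-exact, so $\|h\|^2$ computed with the metric-independent pairing is $c\int\partial(\hat v_i-v_i)\wedge\bar h=0$ by the same Stokes argument. Thus $h=0$, and the two completions agree.

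For the product, $\rho_1+\rho_2=\partial(v_1+v_2)$ completes $\omega=\omega_1+\omega_2$ with $v=v_1+v_2$ pulled back. We must show it is $L^2(\omega)$-orthogonal to holomorphic 2-forms on $X$; here the dimension is four, so the pairing is $\frac12\int\rho\wedge\bar h\wedge\omega^2$ and metric dependence enters. This is the main step. Use the K\"unneth decomposition of holomorphic 2-forms on $S_1\times S_2$: $H^0(\Omega^2_X)=H^0(\Omega^2_{S_1})\oplus \bigl(H^0(\Omega^1_{S_1})\otimes H^0(\Omega^1_{S_2})\bigr)\oplus H^0(\Omega^2_{S_2})$. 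Then treat each type. For $h=h_1\in H^0(\Omega^2_{S_1})$, only the $(2,2)$ part of $\omega^2$ in the $S_2$ directions survives, namely $\omega_2^2$, giving $\int_{S_1}\rho_1\wedge\bar h_1\cdot\int_{S_2}\omega_2^2$, which vanishes by the surface case; symmetrically for $S_2$. For mixed $h=\alpha_1\wedge\alpha_2$, bidegree counting on each factor shows $\rho_i\wedge\bar h\wedge\omega^2$ has a component of type $(3,\cdot)$ or odd degree on one factor, so the integral vanishes by fiber integration (e.g. $\rho_1\wedge\bar\alpha_1$ has degree $3$ on $S_1$, forcing $\omega^2$ to contribute odd degree on $S_1$, impossible since $\omega_1,\omega_2$ are each of pure factor type). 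Hence $\rho$ is orthogonal to all holomorphic 2-forms and is the canonical completion.
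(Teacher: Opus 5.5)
Your proposal is correct and follows the paper's proof essentially step for step. It uses the metric-independent surface pairing with Stokes' theorem for the first two claims, and for the product it uses the decomposition of holomorphic 2-forms into pure and mixed types together with factor-wise bidegree counting. The one term you leave implicit is the cross term $\rho_2\wedge\bar h_1\wedge\omega^2$ (and its mirror) in the pure case; it vanishes by the same bidegree count, since $\omega^2$ cannot supply type $(2,0)$ on $S_1$ together with type $(0,2)$ on $S_2$.
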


\begin{proof}
Fix $i\in\{1,2\}$. The form $\rho_i=\partial v_i$ completes $\omega_i$ because $\Theta_i$ is closed. For every holomorphic 2-form $h$ on $S_i$, we have that
$$
\langle\rho_i,h\rangle_{L^2(\omega_i)}
=\int_{S_i}\partial v_i\wedge\bar h=0.
$$
The last equality follows from Stokes' theorem and the closedness of $h$ on the K\"ahler background $(S_i,\eta_i)$. For arbitrary $(2,0)$-forms on a complex surface, the $L^2$ pairing equals the integral of the wedge product of the first form with the conjugate of the second form. Consequently, this pairing does not depend on the choice of Hermitian metric on the surface. The displayed orthogonality proves that $\rho_i$ is the canonical completion of $\omega_i$.

Now let $\hat\omega_i$ be the positive form specified in the lemma. The equality $d\hat\omega_i=d\omega_i$ makes the completion equations for $\hat\omega_i$ and $\omega_i$ identical. The difference $\rho_{\hat\omega_i}-\rho_i$ is therefore holomorphic. Both canonical completions are orthogonal to every holomorphic 2-form in the same metric-independent surface pairing. Applying this orthogonality to the difference itself shows that $\rho_{\hat\omega_i}-\rho_i=0$ vanishes identically.

A $(2,0)$-form on $S_1\times S_2$ is called pure if its covectors all come from one surface factor, and mixed if one covector comes from each surface factor. The two pure components and the mixed component are pointwise orthogonal with respect to the product metric $\omega_1+\omega_2$. Every holomorphic 2-form on the product is a sum of pullbacks of holomorphic 2-forms from the surfaces and products of holomorphic 1-forms from the two surfaces. The $L^2(\omega)$ pairing of $\rho_1+\rho_2$ with a pullback from $S_i$ contains the vanishing surface pairing just computed, multiplied by the volume of the other factor. The pairing with a mixed holomorphic form vanishes pointwise. Since $\rho_1+\rho_2$ completes $\omega_1+\omega_2$ and is orthogonal to every holomorphic 2-form on the product, it is the canonical completion of the product metric $\omega$.
\end{proof}

For each surface $(S_i,\eta_i)$, we denote the fixed K\"ahler volume and the completion energy by
\begin{equation}\label{surface_energies}
V_i=\frac{1}{2}\int_{S_i}\eta_i^2,\qquad
E_i=\int_{S_i}\nu_i.
\end{equation}
The number $V_i$ depends only on the prescribed K\"ahler class of $\eta_i$, whereas $E_i$ depends on the Hermitian form $\omega_i$. Expansion by type gives $\Theta_i^2/2=\mu_i+\nu_i$. Since the closed forms $\Theta_i$ and $\eta_i$ are cohomologous, integration gives
\begin{equation}\label{surface_volume}
\int_{S_i}\mu_i=V_i-E_i.
\end{equation}
The positivity of $\mu_i$ implies $E_i<V_i$, and the definition of $E_i$ implies $E_i\geq0$. Moreover, we have that $E_i=0$ exactly when $\rho_i=0$. The completion equation then gives $\partial\omega_i=0$, which is equivalent to $d\omega_i=0$ because $\omega_i$ is real. Conversely, if $\omega_i$ is K\"ahler, the zero form is a completion and is necessarily the canonical completion. Thus, we have $E_i=0$ if and only if the Hermitian form $\omega_i$ is K\"ahler on $S_i$.

By Lemma~\ref{surface_completion}, the functional $\mathcal{F}_X$ at the product metric $\omega_1+\omega_2$ is computed using $\rho_1+\rho_2$. The pure-factor terms in~\eqref{functional} give
$$
\mathcal{F}_X(\omega_1+\omega_2)
=E_1\int_{S_2}\mu_2+E_2\int_{S_1}\mu_1.
$$
The remaining terms have unequal holomorphic and antiholomorphic degrees on at least one surface and integrate to zero. Substituting~\eqref{surface_volume}, we obtain the product energy formula
\begin{equation}\label{product_energy}
\mathcal{F}_X(\omega_1+\omega_2)=V_2E_1+V_1E_2-2E_1E_2.
\end{equation}

\subsection{The full critical-point condition}

We retain the product background $(X,\eta)=(S_1\times S_2,\eta_1+\eta_2)$ and the surface data~\eqref{surface_metrics}--\eqref{surface_data}. In particular, the 2-form $\omega=\omega_1+\omega_2$ is the Hermitian product metric and $\rho=\rho_1+\rho_2$ is its canonical completion. We set $\Theta=\Theta_1+\Theta_2=\omega+\rho+\bar\rho$. 

The next lemma is an identity of top-degree forms on $X$ and permits arbitrary mixed components in the forms being varied.

\begin{lemma}\label{pointwise_identity}
Suppose the forms in~\eqref{surface_data} satisfy $\nu_i=\mu_i$ at every point of $S_i$ for each $i\in\{1,2\}$. Then, for every real $(1,1)$-form $\psi$ and every $(2,0)$-form $\sigma$ on $X$, it holds that
\begin{equation}\label{density_identity}
\operatorname{Re}\left(\sigma\wedge\bar\rho\wedge\omega^2\right)
+\rho\wedge\bar\rho\wedge \psi\wedge\omega=\frac1{12}(\psi+\sigma+\bar\sigma)\wedge\Theta^3.
\end{equation}
\end{lemma}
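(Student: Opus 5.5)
Both sides of~\eqref{density_identity} are real-linear in the pair $(\psi,\sigma)$, so I will compute $\Theta^3$ by type on $X=S_1\times S_2$ and compare the two sides componentwise. Write $\Theta_i=\omega_i+\rho_i+\bar\rho_i$ on $S_i$. Since $\Theta_i$ is a 2-form on a real 4-manifold, $\Theta_i^3=0$, and we have $\Theta_i^2=2(\mu_i+\nu_i)$. Because even forms commute,
$$
\Theta^3=3\Theta_1^2\wedge\Theta_2+3\Theta_1\wedge\Theta_2^2
=6(\mu_1+\nu_1)\wedge\Theta_2+6\Theta_1\wedge(\mu_2+\nu_2).
$$
The hypothesis $\nu_i=\mu_i$ turns this into $\Theta^3=12(\mu_1\wedge\Theta_2+\Theta_1\wedge\mu_2)$. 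The right-hand side of~\eqref{density_identity} is therefore
$$
(\psi+\sigma+\bar\sigma)\wedge\bigl(\mu_1\wedge(\omega_2+\rho_2+\bar\rho_2)+(\omega_1+\rho_1+\bar\rho_1)\wedge\mu_2\bigr).
$$
Only the components of $\psi,\sigma$ that are pure on the complementary factor survive, because $\mu_1$ already saturates the $S_1$ directions.

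Next I expand the left-hand side in the same way, using $\omega^2=\omega_1^2+2\omega_1\wedge\omega_2+\omega_2^2=2\mu_1+2\omega_1\wedge\omega_2+2\mu_2$ and $\rho\wedge\bar\rho=\nu_1+\nu_2+\rho_1\wedge\bar\rho_2+\rho_2\wedge\bar\rho_1$.

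For the $\psi$ term, the top-degree bidegree count on each factor kills most products. Only the combinations $\nu_1\wedge\psi\wedge\omega_2$ and $\nu_2\wedge\psi\wedge\omega_1$ survive, together with the cross terms $\rho_1\wedge\bar\rho_2\wedge\psi\wedge\omega$. In a cross term the factor $\omega$ must supply the missing degrees, which requires $\psi$ to have a mixed component of type $(S_2^{1,0})\wedge(S_1^{0,1})$. I must check whether such cross terms match anything on the right. On the right, $\psi\wedge\mu_1\wedge\omega_2$ is available, but a mixed $\psi$ cannot pair with $\mu_1\wedge\omega_2$ since all $S_1$ slots are full. So the mixed $\psi$ contributions must cancel against the mixed $\sigma$ contributions on the left, or vanish by themselves.

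Looking more carefully, $\rho_1\wedge\bar\rho_2$ has $S_1$-type $(2,0)$ and $S_2$-type $(0,2)$. To reach $(2,2)$ on each factor we would need $\psi\wedge\omega$ to contribute $(0,2)$ on $S_1$ and $(2,0)$ on $S_2$. This is impossible, since a real $(1,1)$-form wedged with $\omega$ gives at most $(1,1)$ of the $S_1$-type from the first factor. Hence the cross terms vanish, and the $\psi$ part of the left side equals $\psi\wedge(\nu_1\wedge\omega_2+\omega_1\wedge\nu_2)$, which matches $\psi\wedge(\mu_1\wedge\omega_2+\omega_1\wedge\mu_2)$ by hypothesis.

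For the $\sigma$ term, $\sigma\wedge\bar\rho\wedge\omega^2$ survives only via $\sigma_{(2)}\wedge\bar\rho_2\wedge\omega_1^2=2\sigma_{(2)}\wedge\bar\rho_2\wedge\mu_1$ and symmetrically, where $\sigma_{(i)}$ denotes the component pure on $S_i$. The mixed part of $\sigma$ drops out by the same type count, since $\bar\rho_i$ is $(0,2)$ on a single factor. Taking real parts gives $\sigma_{(2)}\wedge\bar\rho_2\wedge\mu_1+\bar\sigma_{(2)}\wedge\rho_2\wedge\mu_1+(1\leftrightarrow2)$, which is exactly the $\sigma$ and $\bar\sigma$ part of the right side. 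The constant $\frac1{12}$ matches the factor $12$ obtained above.

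The main obstacle is the bookkeeping: I need to confirm that every mixed-component contribution vanishes by the bidegree count on each factor. I would organize this by recording the $(p,q)$-type on $S_1$ and on $S_2$ separately, noting that a nonzero top form needs type $(2,2)$ on each factor.
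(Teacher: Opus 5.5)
Your argument is correct and is essentially the paper's proof: you obtain $\Theta^3=12(\mu_1\wedge\Theta_2+\Theta_1\wedge\mu_2)$ from $\Theta_i^3=0$ and $\Theta_i^2=2(\mu_i+\nu_i)=4\mu_i$, and then match the two sides by bidegree on each factor. The paper instead removes all mixed components of $\psi$ and $\sigma$ at once by parity, since every piece of $\rho$, $\bar\rho$, $\omega$ and $\Theta$ has even degree on each factor. Your cross terms vanish most cleanly because $\rho_1\wedge\omega_1=0$ and $\bar\rho_2\wedge\omega_2=0$ on the surfaces, so $\rho_1\wedge\bar\rho_2\wedge\omega\equiv0$; this should replace your somewhat garbled ``at most $(1,1)$'' justification.
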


\begin{proof}
We set $\zeta=\psi+\sigma+\bar\sigma$, where $\psi$ has bidegree $(1,1)$ and $\sigma$ has bidegree $(2,0)$ as specified in the lemma. For the product decomposition $X=S_1\times S_2$, the degree on each factor counts the covectors coming from that factor. Every form occurring in $\rho$, $\bar\rho$ and $\omega$ has even degree on each factor. A mixed component of $\psi$ or $\sigma$ has odd degree on each factor, so its contribution to either side of~\eqref{density_identity} cannot have top degree on both surfaces and is zero.

For the components of $\psi$ and $\sigma$ supported on a single surface, expansion of the left-hand side of~\eqref{density_identity} and substitution of $\nu_i=\mu_i$ give
$$
\zeta\wedge(\Theta_1\wedge \mu_2+\Theta_2\wedge \mu_1).
$$
For example, a $(1,1)$-component of $\psi$ on $S_1$ pairs with $\omega_1\wedge \nu_2=\omega_1\wedge \mu_2$, while a $(2,0)$-component of $\sigma$ on $S_1$ pairs with $\bar\rho_1\wedge \mu_2$. The conjugate component supplies the corresponding conjugate term. The contributions supported on $S_2$ are obtained by exchanging the two surface factors.

The equality $\nu_i=\mu_i$ also gives $\Theta_i^2=4\mu_i$. Since a surface has real dimension four, $\Theta_i^3=0$. Expanding the cube of $\Theta_1+\Theta_2$, we obtain that
$$
\Theta^3=12(\Theta_1\wedge \mu_2+\Theta_2\wedge \mu_1).
$$
Substitution of this expression for $\Theta^3$ proves~\eqref{density_identity}.
\end{proof}

\begin{theorem}\label{critical_products}
Let $(S_1,\eta_1)$ and $(S_2,\eta_2)$ be K\"ahler surfaces, and let $\omega_i\in[\eta_i]_A$ be the positive Hermitian forms in~\eqref{surface_metrics}, with canonical completions $\rho_i=\partial v_i$. On the product background $(X,\eta)=(S_1\times S_2,\eta_1+\eta_2)$, the metric $\omega=\omega_1+\omega_2$ is a critical point of $\mathcal{F}_X$ in the full Aeppli class $[\eta]_A$ if and only if either $\omega_1$ is K\"ahler on $S_1$ and $\omega_2$ is K\"ahler on $S_2$, or it holds that
\begin{equation}\label{critical_densities}
\begin{cases}
2\rho_1\wedge\bar\rho_1=\omega_1^2,\\
2\rho_2\wedge\bar\rho_2=\omega_2^2.
\end{cases}
\end{equation}
Each equality in~\eqref{critical_densities} is an equality of top-degree forms at every point of the indicated surface.
\end{theorem}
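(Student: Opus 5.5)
The plan is to prove sufficiency from Lemma~\ref{pointwise_identity} together with Stokes' theorem, and necessity by testing the first variation~\eqref{first_variation} against a special family of $(1,0)$-forms $u$ that are products of a form on one factor and a function on the other.

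\emph{Sufficiency.} If $\omega_1$ and $\omega_2$ are both K\"ahler, then $\rho=0$ by Lemma~\ref{surface_completion}. Hence $\mathcal{F}_X(\omega)=0$, which is the minimum of the nonnegative functional, so $\omega$ is critical. Now assume~\eqref{critical_densities}, which says exactly that $\nu_i=\mu_i$. Take an admissible variation $\psi=\partial\bar u+\bar\partial u$ and apply Lemma~\ref{pointwise_identity} with $\sigma=\partial u$. Then $\psi+\sigma+\bar\sigma=d(u+\bar u)$, so the integrand of~\eqref{first_variation} becomes
$$
\tfrac1{12}\,d(u+\bar u)\wedge\Theta^3 .
$$
Since $\Theta$ is closed, this form is exact, and Stokes' theorem makes its integral vanish.

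\emph{Necessity.} Fix a smooth $(1,0)$-form $u_1$ on $S_1$ and a real function $g$ on $S_2$, and set $u=g\,u_1$.
\begin{itemize}
\item The forms $\partial u$ and $\psi$ then differ from $g\,\partial u_1$ and $g\,\psi_1$, where $\psi_1=\partial\bar u_1+\bar\partial u_1$, only by mixed terms of the form $\partial g\wedge u_1$ and its analogues.
\item These mixed terms have odd degree on each factor. By the parity argument in the proof of Lemma~\ref{pointwise_identity}, they contribute nothing to~\eqref{first_variation}.
\item Expanding by factor degrees, only two terms survive. The first is $\operatorname{Re}(g\,\partial u_1\wedge\bar\rho_1)\wedge\omega_2^2$, which equals $2g\operatorname{Re}(\partial u_1\wedge\bar\rho_1)\wedge\mu_2$. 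The second is $g\,\psi_1\wedge\omega_1\wedge\nu_2$; the term $\psi_1\wedge\nu_1$ has degree $6$ on $S_1$ and vanishes.
\item Stokes' theorem applied to the closed form $\Theta_1$ gives $\int_{S_1}d(u_1+\bar u_1)\wedge\Theta_1=0$. Reading off the top-degree part, this says $\int_{S_1}\psi_1\wedge\omega_1=-2\operatorname{Re}\int_{S_1}\partial u_1\wedge\bar\rho_1$.
\end{itemize}
Putting these together, I expect
$$
(D\mathcal{F}_X)_\omega\psi=2\operatorname{Re}\Big(\int_{S_1}\partial u_1\wedge\bar\rho_1\Big)\int_{S_2}g\,(\mu_2-\nu_2).
$$

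It remains to show that if $\omega_1$ is not K\"ahler, then some $u_1$ makes $\operatorname{Re}\int_{S_1}\partial u_1\wedge\bar\rho_1\neq0$.
\begin{itemize}
\item Integrating by parts gives $\int_{S_1}\partial u_1\wedge\bar\rho_1=\int_{S_1}u_1\wedge\partial\bar\rho_1$.
\item Conjugating the completion equation $\bar\partial\rho_1=-\partial\omega_1$ gives $\partial\bar\rho_1=-\bar\partial\omega_1$.
\item This is a nonzero $(1,2)$-form when $\omega_1$ is not K\"ahler, because $d\omega_1=0$ is equivalent to $\bar\partial\omega_1=0$ for real $\omega_1$.
\item Choosing $u_1$ supported near a point where $\bar\partial\omega_1\neq0$, with suitable phase, makes the real part nonzero.
\end{itemize}

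With such a $u_1$ fixed, criticality forces $\int_{S_2}g\,(\mu_2-\nu_2)=0$ for every real $g$, hence $\nu_2=\mu_2$ pointwise. Exchanging the factors, if $\omega_2$ is not K\"ahler then $\nu_1=\mu_1$. The last step is to exclude the mixed case in which $\omega_1$ is K\"ahler and $\omega_2$ is not. In that case $\nu_1=\mu_1$, but $\omega_1$ K\"ahler gives $\rho_1=0$, so $\nu_1=0$ while $\mu_1>0$, a contradiction. Therefore either both $\omega_i$ are K\"ahler, or neither is, and in the second case both equalities in~\eqref{critical_densities} hold.

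The main obstacle is the bookkeeping in the necessity computation. One must verify carefully that every mixed-degree term drops out, and get the constants right so that the coefficient really is $\mu_2-\nu_2$ rather than some other combination. The Stokes identity for $\Theta_1$ is exactly what converts $E_2$-type terms into $\nu_2$, and it should be checked first.
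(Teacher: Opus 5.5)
Your proposal is correct and follows essentially the same route as the paper. Sufficiency uses Lemma~\ref{pointwise_identity} with $\sigma=\partial u$ and Stokes' theorem, and necessity uses product test forms $u=g\,u_1$ whose mixed parts drop out by parity, with the Stokes identity for $\Theta_1$ producing the coefficient $\mu_2-\nu_2$. The only difference is that the paper takes $u_1=v_1$ directly, so the coefficient is $2E_1=2\int_{S_1}\rho_1\wedge\bar\rho_1>0$ and your localization argument through $\bar\partial\omega_1\neq0$ is not needed.
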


\begin{proof}
Suppose first that the surface forms $\omega_1$ and $\omega_2$ are K\"ahler on $S_1$ and $S_2$, respectively. Then $\rho_1=\rho_2=0$, so $\mathcal{F}_X(\omega_1+\omega_2)=0$. Since the functional $\mathcal{F}_X$ is nonnegative on every positive representative of $[\eta]_A$, the product metric $\omega_1+\omega_2$ is an absolute minimizer and hence a critical point in that full Aeppli class. 

Suppose instead that the two equalities~\eqref{critical_densities} hold. Let $u$ be an arbitrary $(1,0)$-form on $X$, and set $\psi=\partial\bar u+\bar\partial u$. The family $\rho+t\partial u$ completes $\omega+t\psi$ and equals the canonical completion at $t=0$. Applying Lemma~\ref{pointwise_identity} with $\sigma=\partial u$ in~\eqref{first_variation}, we obtain that
\begin{equation}\label{stokes_variation}
(D\mathcal{F}_X)_\omega\psi
=\frac1{12}\int_Xd(u+\bar u)\wedge\Theta^3=0.
\end{equation}
The last equality follows from $d\Theta=0$ and Stokes' theorem. Since $u$ was arbitrary, the metric $\omega$ is critical in the full Aeppli class $[\eta]_A$.

Conversely, suppose that the product metric $\omega=\omega_1+\omega_2$ is critical for $\mathcal{F}_X$ in $[\eta]_A$ and that the first surface completion energy $E_1$ is positive. Let $\phi$ be an arbitrary smooth real function on $S_2$, and define the $(1,0)$-form $u$ on $X$ by $u(x,y)=\phi(y)v_1(x)$, where $x\in S_1$ and $y\in S_2$. In the derivative $\partial u$, the pure component supported on $S_1$ is $\phi\rho_1$, the pure component supported on $S_2$ is zero, and the remaining component is mixed. In the real variation $\psi=\partial\bar u+\bar\partial u$, the pure component supported on $S_1$ is $\phi(\partial\bar v_1+\bar\partial v_1)$, and the pure component supported on $S_2$ is zero. Every mixed component contributes zero to~\eqref{first_variation} by the separate-degree argument used in Lemma~\ref{pointwise_identity}.

To compute the remaining surface integral, replace $v_1$ by $(1+s)v_1$ in~\eqref{surface_metrics}. This replacement changes the completion $\rho_1$ to $(1+s)\rho_1$ and the energy $E_1$ to $(1+s)^2E_1$, while the number $V_1$ remains fixed. Differentiating~\eqref{surface_volume} at $s=0$ therefore gives
$$
\int_{S_1}(\partial\bar v_1+\bar\partial v_1)\wedge\omega_1=-2E_1.
$$
Now, it is readily seen that
\begin{equation}\label{localized_variation}
(D\mathcal{F}_X)_\omega\psi
=2E_1\int_{S_2}\phi(\mu_2-\nu_2).
\end{equation}
Full criticality of $\omega$ makes the left-hand side of~\eqref{localized_variation} zero for every choice of $\phi$. Since $E_1>0$, we conclude that $\mu_2=\nu_2$ pointwise on $S_2$. The positivity of $\mu_2$ implies $$E_2=\int_{S_2}\nu_2>0.$$ We may therefore repeat the same argument using an arbitrary real function on $S_1$ multiplied by the $(1,0)$-form $v_2$ on $S_2$. That variation gives $\mu_1=\nu_1$ pointwise on $S_1$.

If the initially nonzero energy is $E_2$, the preceding argument applies after exchanging the indices $1$ and $2$. If both completion energies $E_1$ and $E_2$ vanish, the equivalence following~\eqref{surface_volume} shows that $\omega_1$ is K\"ahler on $S_1$ and $\omega_2$ is K\"ahler on $S_2$. These cases exhaust the possibilities and prove the theorem.
\end{proof}

\section{Existence in Prescribed K\"ahler Classes}\label{section_existence}

For a complex surface $S$, we denote its canonical line bundle by $K_S=\Lambda^{2,0}T^*S$. The complex line bundle $K_S$ is smoothly trivial if it admits a nowhere-zero smooth section. Smooth triviality of $K_S$ is equivalent to the vanishing of the first Chern class $c_1(K_S)$ in $H^2(S,\mathbb Z)$. The integral cohomology group is essential here: vanishing of $c_1(K_S)$ only in $H^2(S,\mathbb R)$ is not the condition in the following theorem.

\begin{theorem}\label{existence}
Let $(S_1,\eta_1)$ and $(S_2,\eta_2)$ be compact K\"ahler surfaces, and equip $X=S_1\times S_2$ with the product K\"ahler form $\eta=\eta_1+\eta_2$. There exists a non-K\"ahler product Hermitian metric $\omega=\omega_1+\omega_2$, with $\omega_i\in[\eta_i]_A$ for each $i\in\{1,2\}$, which is a critical point of the Dinew--Popovici functional $\mathcal{F}_X$ on the full space of positive representatives of the Aeppli class $[\eta]_A$, if and only if
\begin{equation}\label{chern_condition}
c_1(K_{S_i})=0\in H^2(S_i,\mathbb Z)
\end{equation}
for each $i\in\{1,2\}$.
\end{theorem}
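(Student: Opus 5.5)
By Theorem~\ref{critical_products}, a non-K\"ahler critical product exists exactly when, for each $i$, there is a $(1,0)$-form $v_i$ on $S_i$ such that $\omega_i=\eta_i+\partial\bar v_i+\bar\partial v_i>0$ and $2\partial v_i\wedge\overline{\partial v_i}=\omega_i^2$ pointwise. The K\"ahler alternative is excluded because $\omega$ is non-K\"ahler. Conversely, if both equalities hold then $\nu_i=\mu_i>0$, so each $E_i>0$ and neither factor is K\"ahler. The theorem therefore reduces to a statement about a single surface.

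\emph{Necessity.} If $2\rho_i\wedge\bar\rho_i=\omega_i^2$, then $\rho_i\wedge\bar\rho_i$ is a positive volume form. At each point of $S_i$ this forces $\rho_i\neq0$. So $\rho_i=\partial v_i$ is a nowhere-zero smooth section of $K_{S_i}$, which makes $K_{S_i}$ smoothly trivial. Hence $c_1(K_{S_i})=0$ in $H^2(S_i,\mathbb Z)$.

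\emph{Sufficiency.} This is the substantive direction, and I would carry it out in three steps.
\begin{enumerate}
\item \textbf{A nowhere-zero $\partial$-exact $(2,0)$-form.} Take a nowhere-zero smooth section $\tau$ of $K_{S_i}$. By Dolbeault theory on the compact K\"ahler surface, $\Omega^{2,0}=\mathcal H^{2,0}\oplus\partial\Omega^{1,0}$, the $\partial$-exact part being $\partial\partial^*G\tau$. I would then produce $\rho=\partial v$ that is nowhere zero, perhaps by adding to $\tau$ a suitable multiple. A cleaner route may be to write $\tau=h+\partial w$ with $h$ holomorphic. If $h\equiv0$ we are done. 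Otherwise I would replace $\tau$ by $f\tau$ with $f$ a nowhere-zero complex function chosen so that the harmonic part vanishes. The harmonic projection $f\mapsto\int f\tau\wedge\bar h_j$ is a finite set of linear conditions, and one must keep $f$ nonvanishing, e.g.\ $f=e^{g}$ or $f=1+\epsilon g$ arranged to cancel. Rescaling by a large constant $\lambda$ is also available, since $\lambda\rho$ is still $\partial$-exact.
\item \textbf{Positivity of $\omega$ (Buchdahl).} Given $\rho=\partial v$, I need $\eta_i+\partial\bar v+\bar\partial v$ to be positive, which requires adjusting $v$ by $\partial$-closed $(1,0)$-forms. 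I would apply Buchdahl's positivity theorem~\cite{Buchdahl} on surfaces: a real $(1,1)$-form that is $dd^c$-closed with suitable class conditions is Aeppli-cohomologous to a positive form. This gives $\tilde\omega_0=\eta_i+\partial\bar v'+\bar\partial v'>0$ with $\partial v'=\rho$. To get there I would modify $v$ by $\bar\partial$-exact or $\partial$-closed pieces $\bar\partial\phi$-type terms that do not change $\partial v$ modulo $\partial\bar\partial$-terms. The key input is that $\eta_i$ itself is positive, so a Buchdahl-type argument applies to the class $[\eta_i]_A$ with the fixed torsion $d\omega_i=\partial\rho+\overline{\partial\rho}$ constraints. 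By Lemma~\ref{surface_completion}, any such form has canonical completion $\rho$.
\item \textbf{Matching the volume (Tosatti--Weinkove).} I need to deform within the class, preserving $d\omega$, so that $\omega^2=2\rho\wedge\bar\rho$. I would take $\omega=\tilde\omega_0+i\partial\bar\partial\varphi$, which keeps $d\omega$ and hence keeps $\rho$ as the completion. I would then solve the Hermitian Monge--Amp\`ere equation $(\tilde\omega_0+i\partial\bar\partial\varphi)^2=e^{b}\,2\rho\wedge\bar\rho$ using~\cite{TosattiWeinkove}, where $b$ is a constant. Integrating and using~\eqref{surface_volume} gives $V_i-E_i=e^bE_i$. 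So $b=0$ precisely when $E_i=V_i/2$.
\end{enumerate}

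The constraint $E_i=V_i/2$ is where the freedom to scale $\rho$ enters, and the main obstacle is arranging it together with the construction of $\rho$. The idea is to use Steps 1--2 with $\lambda\rho$ in place of $\rho$ and choose $\lambda$ so that $|\lambda|^2\int\rho\wedge\bar\rho=V_i/2$. Scaling by $\lambda$ does not break Step 3. However, the positivity in Step 2 must hold for that particular $\lambda$, which requires Buchdahl's theorem to apply for every prescribed $\partial$-exact $\rho$, and this uniformity is the step I expect to be hardest. The nonvanishing of a $\partial$-exact representative in Step 1 is the second delicate point.
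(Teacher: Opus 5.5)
Your overall plan matches the paper's. Necessity comes from the nowhere-vanishing of $\rho_i$. Sufficiency goes through a nowhere-zero $\partial$-exact $(2,0)$-form, Buchdahl's criterion, and the Tosatti--Weinkove equation under the normalization $\int\rho\wedge\bar\rho=V/2$. Your reduction, your necessity argument and your Step 3 are correct. The gap is that Step 1, where the real content lies, is not proved.

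\textbf{Step 1.} Asking for a nowhere-zero $f$ with $\int f\tau\wedge\bar h_j=0$ is a nonlinear problem, because nowhere-zero functions do not form a linear space. Your candidate $f=1+\epsilon g$ fails for small $\epsilon$, since the integral stays near $\int\tau\wedge\bar h=\int h\wedge\bar h\neq0$. Two ideas are missing.
\begin{enumerate}
\item Under $c_1(K_S)=0$, a nonzero holomorphic $2$-form $\Omega$ has empty zero divisor, because a nonempty effective divisor has positive $\eta$-degree. So $\Omega$ trivializes $K_S$ and $h^{2,0}(S)\le1$. This reduces the problem to the single condition $\int f\,\Omega\wedge\bar\Omega=0$ with $\tau=\Omega$.
\item The paper solves that condition with $f=e^{c\phi}$, where $\phi$ is real and nonconstant and $c$ is a zero of the entire function $M(z)=\int_S e^{z\phi}\,d\mu$, with $\mu$ the normalized measure of $\Omega\wedge\bar\Omega$. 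Such a zero exists because $|M(z)|\le e^{\|\phi\|_\infty|z|}$. If $M$ were zero-free, Hadamard factorization would give $M=e^{az+b}$. Then $M''(0)-M'(0)^2=\operatorname{Var}_\mu(\phi)$ would vanish, which is impossible for nonconstant $\phi$ since $\mu$ has a positive density.
\end{enumerate}
Then $\rho=e^{c\phi}\Omega$ is nowhere zero and orthogonal to $\Omega$, hence $\partial$-exact.

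\textbf{Step 2.} This step is also not carried out, and the obstacle you name there is not the real one.
\begin{itemize}
\item Being Aeppli-cohomologous to a positive form is the wrong target: $\alpha=\eta+\partial\bar v+\bar\partial v$ trivially is, to $\eta$. You need $\alpha+dd^c\psi>0$ for a real function $\psi$. That modification preserves $\partial\alpha$ and hence the completion $\rho$.
\item Buchdahl's criterion requires $\int\alpha^2>0$, $\int\alpha\wedge\eta>0$, and $\int_D\alpha>0$ for every irreducible curve $D$. Positivity of $\eta$ does not give the first condition. Closedness of $\alpha+\rho+\bar\rho=\eta+d(v+\bar v)$ yields $\tfrac12\int\alpha^2=V-\int\rho\wedge\bar\rho$, which is $\le0$ once $\rho$ is large. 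So positivity uniform in $\rho$ is neither possible nor needed.
\item Normalize $\int\rho\wedge\bar\rho=V/2$ before invoking Buchdahl, as the paper does. The three conditions then follow from Stokes: $\tfrac12\int\alpha^2=V/2$; $\int\alpha\wedge\eta=\int\eta^2$, since $\rho\wedge\eta$ has bidegree $(3,1)$; and $\int_D\alpha=\int_D\eta$, since $\rho$ pulls back to zero on the normalization of any curve.
\end{itemize}
With Step 1 supplied and these checks made, your Step 3 finishes the proof exactly as in the paper.
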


For the necessity in Theorem~\ref{existence}, Theorem~\ref{critical_products} gives $\rho_i\wedge\bar\rho_i=\omega_i^2/2$ for the canonical completion $\rho_i$ of each factor metric $\omega_i$. Since the volume form $\omega_i^2/2$ is positive everywhere on $S_i$, the form $\rho_i$ is a nowhere-zero smooth section of $K_{S_i}$. Thus, $K_{S_i}$ is smoothly trivial for each $i\in\{1,2\}$. We shall prove the converse by first constructing a nowhere-zero exact $(2,0)$-form on each surface and then prescribing the volume form of a Hermitian metric on that surface.

\begin{lemma}\label{exact_nonvanishing_form}
Let $(S,\eta)$ be a compact K\"ahler surface with $c_1(K_S)=0$ in $H^2(S,\mathbb Z)$. Then, there exists a nowhere-zero smooth $(2,0)$-form $\rho$ on $S$ which is $\partial$-exact, namely $\rho=\partial v$ for a smooth $(1,0)$-form $v$ on $S$.
\end{lemma}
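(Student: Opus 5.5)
The plan is to reduce $\partial$-exactness of a $(2,0)$-form on $S$ to finitely many linear integral conditions, and then to meet those conditions by rescaling a fixed nowhere-zero section of $K_S$ by a suitable nowhere-zero function.

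\textbf{Step 1: exactness criterion.} On a surface every $(2,0)$-form $\rho$ satisfies $\partial\rho=0$ for degree reasons. So $\rho$ is $\partial$-exact if and only if the $(0,2)$-form $\bar\rho$ is $\bar\partial$-exact. By Hodge theory on the compact K\"ahler surface $(S,\eta)$, equivalently by Serre duality pairing $H^{0,2}(S)$ with $H^{2,0}(S)$, this holds if and only if
$$
\int_S\rho\wedge\bar h=0
$$
for every holomorphic $2$-form $h$ on $S$. This is the same metric-independent pairing already used in Lemma~\ref{surface_completion}. Fix a basis $h_1,\dots,h_p$ of $H^0(S,K_S)$. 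If $p=0$, any nowhere-zero section works, so assume $p\geq1$.

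\textbf{Step 2: rescaling a nowhere-zero section.} Since $c_1(K_S)=0$ in $H^2(S,\mathbb Z)$, there is a nowhere-zero smooth section $s$ of $K_S$. We look for $\rho=fs$ with $f\colon S\to\mathbb C\setminus\{0\}$ smooth. Set $\tau_j=s\wedge\bar h_j$, which are complex top-degree forms. The requirement becomes
$$
L_j(f):=\int_S f\,\tau_j=0\qquad\text{for }j=1,\dots,p.
$$
The functionals $L_j$ are linearly independent. Indeed, if $\sum_j\bar c_j\tau_j=s\wedge\overline{\sum_j c_jh_j}$ vanishes, then $\sum_j c_jh_j=0$, because $s$ is nowhere zero. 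Hence we can choose smooth functions $g_1,\dots,g_p$ with $L_k(g_j)=\delta_{jk}$.

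\textbf{Step 3: oscillating factor and correction.} Take a Morse function $\varphi$ on $S$ and set $f_N=e^{iN\varphi}$, so that $|f_N|=1$. The critical set of $\varphi$ is finite. Therefore the push-forward under $\varphi$ of each complex measure $\tau_j$ is absolutely continuous on $\mathbb R$ with an $L^1$ density. Riemann--Lebesgue then gives $L_j(f_N)\to0$ as $N\to\infty$.

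Now set
$$
f=f_N-\sum_j L_j(f_N)\,g_j.
$$
This $f$ satisfies $L_k(f)=0$ for every $k$. For $N$ large we have $\sum_j|L_j(f_N)|\sup|g_j|<1$, so $|f|>0$ everywhere. Then $\rho=fs$ is nowhere zero and, by Step~1, $\rho=\partial v$ for a smooth $(1,0)$-form $v$.

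\textbf{Main obstacle.} The delicate point is Step~3: producing a nowhere-vanishing $f$ that annihilates the $L_j$. The oscillation argument handles this, and the only analytic input is Riemann--Lebesgue for the push-forward densities, which needs the measure-zero critical set of $\varphi$. One should also double-check the Hodge-theoretic criterion in Step~1, in particular that the $\bar\partial$-harmonic $(0,2)$-forms are exactly the conjugates $\bar h$ of holomorphic $2$-forms on a compact K\"ahler surface.
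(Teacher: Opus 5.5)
Your proof is correct, and it takes a genuinely different route from the paper's.

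\textbf{The paper's route.} The case $h^{2,0}(S)=0$ is handled exactly as you do. When $h^{2,0}(S)>0$, the paper uses the integral hypothesis a second time. The zero divisor of a nonzero holomorphic $2$-form $\Omega$ represents $c_1(K_S)=0$, and a nonempty effective divisor has positive $\eta$-area. Hence $\Omega$ is nowhere zero and $h^{2,0}(S)=1$. This leaves a single orthogonality condition. The paper meets it with $\rho=e^{c\phi}\Omega$, where $c$ is a complex zero of the entire function $M(z)=\int_S e^{z\phi}\,d\mu$ and $\mu$ is the normalized positive measure $\Omega\wedge\bar\Omega$. Such a zero exists because a zero-free entire function of order at most one would be $e^{az+b}$ by Hadamard factorization, and that would force the variance of $\phi$ to vanish. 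Positivity of $\Omega\wedge\bar\Omega$ is what makes the variance argument work. That is why the paper needs the divisor step and the computation of $h^{2,0}(S)$.

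\textbf{What your route does differently.} You use $c_1(K_S)=0$ only to obtain a smooth trivialization $s$, and you never determine $h^{2,0}(S)$. You remove any finite number of linear constraints at once, via Riemann--Lebesgue for $e^{iN\varphi}$ followed by a small correction in the dual functions $g_j$.

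\textbf{What each approach buys.} Your Step~1 can be phrased purely through Serre duality, and holomorphic $2$-forms on any compact surface are closed for degree reasons. So your argument never uses the K\"ahler form and applies to every compact complex surface with smoothly trivial canonical bundle. In exchange, the paper produces an explicit form, a holomorphic trivialization times a nowhere-zero function, with no approximation step.

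\textbf{The points you flagged.} Both are fine. On a compact K\"ahler surface $\Delta_{\bar\partial}=\Delta_\partial$, and conjugation intertwines the two Laplacians, so the $\bar\partial$-harmonic $(0,2)$-forms are exactly the forms $\bar h$. Off the finite critical set, $\varphi$ is locally a coordinate projection, so preimages of Lebesgue-null sets are null and the push-forward densities lie in $L^1$.
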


\begin{proof}
We write $h^{2,0}(S)$ for the complex dimension of the space of holomorphic 2-forms on $S$. Suppose first that $h^{2,0}(S)=0$, and choose a nowhere-zero smooth section $\rho$ of the smoothly trivial bundle $K_S$. Since $S$ has complex dimension two, the $(3,0)$-form $\partial\rho$ vanishes. The K\"ahler Hodge decomposition for the background form $\eta$ identifies the $\partial$-harmonic forms of bidegree $(2,0)$ with the holomorphic 2-forms on $S$. Thus, there is no harmonic component of $\rho$, and the $\partial$-closed form $\rho$ is $\partial$-exact.

Suppose now that $h^{2,0}(S)>0$, and choose a nonzero holomorphic 2-form $\Omega$ on $S$. The zero divisor of $\Omega$ represents $c_1(K_S)$. A nonempty effective divisor on $S$ has strictly positive integral against the K\"ahler form $\eta$, whereas $c_1(K_S)=0$. Consequently, the zero divisor of $\Omega$ is empty. The form $\Omega$ therefore trivializes $K_S$ holomorphically. Every other holomorphic 2-form on $S$ is a holomorphic function times $\Omega$, and every holomorphic function on the compact connected complex manifold $S$ is constant. Hence, $h^{2,0}(S)=1$.

We normalize the positive measure proportional to $\Omega\wedge\bar\Omega$ to have total mass one, and denote the resulting probability measure on $S$ by $\mu$. Choose a nonconstant smooth real function $\phi$ on $S$ and consider the entire function
$$
M(z)=\int_S e^{z\phi}\,d\mu.
$$
We claim that $M$ has a zero in $\mathbb C$. Indeed, compactness of $S$ gives the estimate $$|M(z)|\leq e^{\|\phi\|_{\infty}|z|}.$$ If $M$ had no zeros, the Hadamard factorization would then imply that $M(z)=e^{az+b}$ for some constants $a,b\in\mathbb C$. Since $M(0)=1$, this exponential representation would give
$$
0=M''(0)-M'(0)^2
=\int_S \phi^2\,d\mu-\left(\int_S \phi\,d\mu\right)^2.
$$
The expression on the right is the variance of $\phi$ with respect to $\mu$, namely the integral of the squared difference between $\phi$ and its mean. This variance is strictly positive because $\phi$ is nonconstant and $\mu$ is positive on every nonempty open subset of $S$. We obtain a contradiction, so $M$ has a zero.

Let $c\in\mathbb C$ satisfy $M(c)=0$, and set $\rho=e^{c\phi}\Omega$. The form $\rho$ is nowhere zero, since neither the exponential factor nor $\Omega$ vanishes. Moreover, the surface $L^2(\eta)$ pairing gives
$$
\langle\rho,\Omega\rangle_{L^2(\eta)}
=\int_S e^{c\phi}\Omega\wedge\bar\Omega
=M(c)\int_S\Omega\wedge\bar\Omega=0.
$$
The form $\rho$ is $\partial$-closed by degree, and the $\partial$-harmonic space of bidegree $(2,0)$ for the K\"ahler form $\eta$ is spanned by $\Omega$. The K\"ahler Hodge decomposition therefore implies that $\rho$ is $\partial$-exact.
\end{proof}

\begin{proposition}\label{surface_density_existence}
Let $(S,\eta)$ be a compact K\"ahler surface whose canonical bundle $K_S$ is smoothly trivial. Then, there exists a non-K\"ahler Hermitian form $\omega\in[\eta]_A$ whose canonical completion $\rho$ satisfies $\omega^2/2=\rho\wedge\bar\rho$ pointwise on $S$.
\end{proposition}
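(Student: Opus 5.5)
The plan is to build $\omega$ in three steps. First fix the completion using Lemma~\ref{exact_nonvanishing_form}. Next use Buchdahl's positivity theorem to find a positive Hermitian form with that completion. Finally correct the volume form by a $dd^c$-potential, which changes neither $d\omega$ nor the canonical completion.

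\textbf{Step 1: the completion.} By Lemma~\ref{exact_nonvanishing_form}, choose a smooth $(1,0)$-form $v_0$ with $\rho_0=\partial v_0$ nowhere zero. For $\lambda>0$ set $v=\lambda v_0$ and $\rho=\lambda\rho_0$, so that $\nu=\rho\wedge\bar\rho=\lambda^2\rho_0\wedge\bar\rho_0$ is a positive volume form. Any solution with $\mu=\nu$ must satisfy $E=\int\mu=V-E$ by~\eqref{surface_volume}, hence $E=V/2$. So I fix $\lambda$ by
$$
\lambda^2\int_S\rho_0\wedge\bar\rho_0=\frac V2 .
$$

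\textbf{Step 2: a positive form with the same $d$.} Consider the real pluriclosed form $\alpha=\eta+\partial\bar v+\bar\partial v$, which need not be positive. I look for a positive form
$$
\omega_0=\alpha+\partial\bar w+\bar\partial w,\qquad \partial w=0 .
$$
Then $\partial\bar w+\bar\partial w$ is $d$-closed, so $d\omega_0=d\alpha$, and $\omega_0$ stays in $[\eta]_A$ with completion $\partial v$.

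The existence of such $\omega_0$ is what I would extract from Buchdahl's theorem~\cite{Buchdahl}. That theorem characterizes when a real $(1,1)$-form on a compact surface can be made positive by adding $d$-closed Aeppli-exact terms. The criterion is a numerical positivity of the associated class: positive integral against effective curves and positive self-intersection.

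The point is that $\Theta=\alpha+\rho+\bar\rho$ is cohomologous to the K\"ahler form $\eta$. So I expect the relevant class pairings to be those of $[\eta]$, with self-intersection $2(V-E)=V>0$, which should satisfy Buchdahl's criterion. \textbf{This is the step I expect to be the main obstacle:} translating Buchdahl's hypotheses precisely into the data $(\alpha,\rho)$ and checking that the modifications allowed there preserve $d\alpha$.

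\textbf{Step 3: prescribing the volume form.} By the Tosatti--Weinkove Hermitian Monge--Amp\`ere theorem~\cite{TosattiWeinkove}, there are a smooth real $\varphi$ and a constant $b$ with
$$
\omega=\omega_0+i\partial\bar\partial\varphi>0,\qquad \omega^2=2e^b\nu .
$$
Since $\partial\bar\partial\omega_0=0$, integration by parts gives $\int_S\omega^2=\int_S\omega_0^2$. Because $\omega_0$ has completion $\rho$, \eqref{surface_volume} gives $\int_S\omega_0^2/2=V-E=V/2$. Combined with $\int_S\nu=V/2$ from Step 1, this forces $b=0$.

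The form $i\partial\bar\partial\varphi=\partial\bar u+\bar\partial u$ with $u=-\tfrac{i}{2}\partial\varphi$ (so $\partial u=0$) is closed. Hence $d\omega=d\omega_0$ and $\omega\in[\eta]_A$. Lemma~\ref{surface_completion} then shows that the canonical completion of $\omega$ is $\rho=\partial v$, so $\omega^2/2=\rho\wedge\bar\rho$ pointwise. Finally, $\omega$ is not K\"ahler because its completion energy is $E=V/2>0$.
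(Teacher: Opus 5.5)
Your construction is the paper's proof step for step: normalize $\rho=\partial v$ so that $\int_S\rho\wedge\bar\rho=V/2$, make $\alpha=\eta+\partial\bar v+\bar\partial v$ positive via Buchdahl, prescribe the volume form by Tosatti--Weinkove with the constant forced to $1$ by integration, and conclude with Lemma~\ref{surface_completion} and $E=V/2>0$. Steps 1 and 3 are fine. The gap is the step you flagged yourself: you never check Buchdahl's hypotheses, and the criterion you quote leaves one out. Besides $\int_S\alpha^2>0$ and $\int_D\alpha>0$ for irreducible curves $D$, you also need positivity against a Gauduchon form, here $\int_S\alpha\wedge\eta>0$. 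This is not a technicality for the main examples. A generic K3 surface or torus contains no curves, so the curve condition is vacuous. Then $\int_S\alpha^2>0$ cannot distinguish $\alpha$ from $-\alpha$, since $\int_S dd^c\psi\wedge\eta=0$ for every function $\psi$.

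All three conditions follow from the closed form $\Theta=\alpha+\rho+\bar\rho=\eta+d(v+\bar v)$ by counting bidegrees, as in the paper. The $(2,2)$-part of $\Theta^2/2$ is $\alpha^2/2+\rho\wedge\bar\rho$, so $\frac12\int_S\alpha^2=V-V/2>0$. The forms $\rho\wedge\eta$ and $\bar\rho\wedge\eta$ have bidegrees $(3,1)$ and $(1,3)$ on a surface, hence vanish, so $\int_S\alpha\wedge\eta=\int_S\eta^2>0$. For an irreducible curve $D$, the pullbacks of $\rho$ and $\bar\rho$ to the normalization of $D$ vanish by degree, and Stokes' theorem gives $\int_D\alpha=\int_D\eta>0$. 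Your other worry, whether Buchdahl's modification preserves $d\alpha$, goes away. His conclusion is $\alpha+dd^c\psi>0$ for a smooth real function $\psi$, and $dd^c\psi=\partial\bar w+\bar\partial w$ with $w=-i\partial\psi$ and $\partial w=0$. So it is an instance of your ansatz and preserves both $d\alpha$ and the completion $\partial v$. With these lines inserted, your argument is complete and coincides with the paper's.
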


\begin{proof}
For simplicity, we set $$V:=\frac{1}{2}\int_S\eta^2,$$ the total volume of the background K\"ahler metric $\eta$. By Lemma~\ref{exact_nonvanishing_form}, we may choose a nowhere-zero form $\rho=\partial v$ on $S$ and multiply $\rho$ and $v$ by the same positive constant so that
$$
\int_S\rho\wedge\bar\rho=\frac{V}{2}.
$$
Consider the real pluriclosed $(1,1)$-form $\alpha=\eta+\partial\bar v+\bar\partial v$ on $S$. We do not yet assert that $\alpha$ is positive. The real 2-form $\alpha+\rho+\bar\rho=\eta+d(v+\bar v)$ is closed and is cohomologous to $\eta$. By expanding the square of this closed form and applying Stokes' theorem, we obtain that
\begin{equation}\label{positivity_data}
\frac{1}{2}\displaystyle\int_S \alpha^2=\frac{V}{2}>0,\qquad
\displaystyle\int_S \alpha\wedge\eta=\int_S\eta^2>0,\qquad
\displaystyle\int_D \alpha=\int_D\eta>0
\end{equation}
for every irreducible complex curve $D\subset S$. In the third expression, integration over a singular curve means integration of the pulled-back form over the normalization of that curve. The pullbacks of $\rho$ and $\bar\rho$ to a complex curve vanish by degree. Restricting $\alpha+\rho+\bar\rho=\eta+d(v+\bar v)$ to the normalization of $D$ and applying Stokes' theorem therefore gives the third equality in~\eqref{positivity_data}.

By Buchdahl's positivity theorem \cite{Buchdahl}, the conditions~\eqref{positivity_data} imply that $\hat\omega=\alpha+dd^c\psi>0$ for some smooth real function $\psi$ on $S$. We next apply the Hermitian Monge--Amp\`ere theorem \cite{TosattiWeinkove} with background Hermitian form $\hat\omega$ and prescribed positive density $\rho\wedge\bar\rho$. There exist a smooth real function $\varphi$ on $S$ and a constant $c>0$ such that
$$
\frac{(\hat\omega+dd^c\varphi)^2}{2}=c\rho\wedge\bar\rho,
$$
with $\hat\omega+dd^c\varphi>0$. The background Hermitian form $\hat\omega$ is pluriclosed because $\alpha$ is pluriclosed. Stokes' theorem therefore shows that adding $dd^c\varphi$ to $\hat\omega$ does not change the integral of the square. The same argument applies when adding $dd^c\psi$ to $\alpha$, so we have that
$$
\frac{1}{2}\int_S(\hat\omega+dd^c\varphi)^2
=\frac{1}{2}\int_S\hat\omega^2
=\frac{1}{2}\int_S\alpha^2=\frac{V}{2}.
$$
Since the prescribed density $\rho\wedge\bar\rho$ also has integral $V/2$, integration of the Monge--Amp\`ere equation gives $c=1$.

We set $\omega=\hat\omega+dd^c\varphi$. The positive Hermitian form $\omega$ belongs to the Aeppli class $[\eta]_A$, and adding the two $dd^c$ terms has not changed the equation $\bar\partial\rho=-\partial \alpha$. Thus, we obtain that $\rho$ is a completion of $\omega$. Moreover, we have $\rho=\partial v$ is orthogonal to every holomorphic 2-form on $S$ in the metric-independent surface pairing, as in Lemma~\ref{surface_completion}. Hence, we conclude that $\rho$ is the canonical completion of $\omega$. If $\omega$ were K\"ahler, the equation $\bar\partial\rho=-\partial\omega$ would make $\rho$ holomorphic. Orthogonality of the canonical completion $\rho$ to all holomorphic 2-forms would then force $\rho=0$, contradicting the fact that $\rho$ is nowhere zero. This proves that $\omega$ is non-K\"ahler.
\end{proof}

\begin{proof}[Proof of Theorem~\ref{existence}]
The necessity of~\eqref{chern_condition} was proved immediately after the statement of Theorem~\ref{existence}. Conversely, suppose that~\eqref{chern_condition} holds for the two compact K\"ahler surfaces $(S_1,\eta_1)$ and $(S_2,\eta_2)$. Applying Proposition~\ref{surface_density_existence} to each surface gives a non-K\"ahler Hermitian form $\omega_i\in[\eta_i]_A$ whose canonical completion $\rho_i$ satisfies $\rho_i\wedge\bar\rho_i=\omega_i^2/2$. The product Hermitian form $\omega=\omega_1+\omega_2$ is therefore non-K\"ahler and satisfies~\eqref{critical_densities}. By Theorem~\ref{critical_products}, the form $\omega$ is a critical point of $\mathcal{F}_X$ on the full space of positive representatives of the Aeppli class $[\eta_1+\eta_2]_A$.
\end{proof}

K3 surfaces and complex 2-tori have trivial canonical bundles. Hence, for compact K\"ahler surfaces $(S_i,\eta_i)$ whose underlying complex surfaces are of these types, Theorem~\ref{existence} provides a non-K\"ahler full critical product in every prescribed class $[\eta_1+\eta_2]_A$. Conversely, if either integral class $c_1(K_{S_i})$ is nonzero, every full critical product with $\omega_i\in[\eta_i]_A$ is K\"ahler. In particular, this excludes non-K\"ahler full critical products with a $\mathbb{P}^2$ factor, because $c_1(K_{\mathbb{P}^2})$ is minus three times the hyperplane class. These conclusions concern product metrics. The construction in Proposition~\ref{surface_density_existence} does not require either surface to be projective.

\section{Torsion Flow and Static Instability}\label{section_flow}

Let $(X,\eta)$ be a compact K\"ahler manifold of complex dimension $n$. We keep the background K\"ahler form $\eta$ fixed and allow the Hermitian form $\omega$ to vary among the positive representatives of $[\eta]_A$. We denote the nonnegative Hodge Laplacian of the K\"ahler metric $\eta$ by $$\Delta_\eta=dd_\eta^*+d_\eta^*d,$$ where $d_\eta^*$ is the formal adjoint of $d$ in the $L^2(\eta)$ inner product. We write $\Lambda_\eta$ for contraction with $\eta$, equivalently, the adjoint of the Lefschetz operator.

For a time-dependent positive Hermitian form $\omega(t)$ on $X$, we consider the equation
\begin{equation}\label{torsion_flow}
\frac{\partial\omega}{\partial t}
=-\frac12\Delta_\eta\omega
+\frac12dd^c\left(\log(\omega^n/\eta^n)-\Lambda_\eta\omega\right)
-\lambda(\omega-\eta),
\end{equation}
where $\lambda>0$ is a fixed constant. The ratio $\omega^n/\eta^n$ denotes the positive function obtained by dividing the two volume forms pointwise. The diffusion coefficient in~\eqref{torsion_flow} is fixed at $1/2$. For a fixed positive Hermitian form $\omega\in[\eta]_A$, we denote the right-hand side of~\eqref{torsion_flow}, evaluated at $\omega$, by $\xi_\omega$. Thus, $\xi_\omega$ is a real $(1,1)$-form on $X$, and is the prescribed initial velocity of the equation at $\omega$.

The form $\xi_\omega$ is tangent to the Aeppli class $[\eta]_A$. Indeed, if $\omega-\eta=\partial\bar v+\bar\partial v$, the commutation of $\Delta_\eta$ with $\partial$ and $\bar\partial$, together with $\Delta_\eta\eta=0$, shows that $\Delta_\eta\omega$ is Aeppli-exact. The $dd^c$ term and the damping term in~\eqref{torsion_flow} are also Aeppli-exact. Consequently, the affine curve $\omega+s\xi_\omega$ stays in $[\eta]_A$, and remains positive for sufficiently small $|s|$. This affine curve is available even when no solution of the evolution equation has been constructed.

For a Hermitian form $\omega$, we call the real 3-form $d\omega$ the exterior torsion form of $\omega$. The $(2,1)$ component $\partial\omega$, together with the metric $\omega$, determines the torsion tensor of the Chern connection. Along any smooth positive solution of~\eqref{torsion_flow}, applying $d$ to the evolution equation gives
$$
\frac{\partial}{\partial t}d\omega
=-\frac12\Delta_\eta d\omega-\lambda d\omega.
$$
Here the $dd^c$ term vanishes after applying $d$, the background form $\eta$ is closed, and the Hodge Laplacian $\Delta_\eta$ commutes with $d$. In particular, the exterior torsion form of the evolving Hermitian metric satisfies a linear heat equation with damping on the fixed K\"ahler background.

\subsection{Surface dissipation}

We now take the compact K\"ahler surfaces $(S_1,\eta_1)$ and $(S_2,\eta_2)$ of Section~\ref{section_products}, and equip $X=S_1\times S_2$ with the background K\"ahler form $\eta=\eta_1+\eta_2$. We consider product Hermitian forms $\omega=\omega_1+\omega_2$ with $\omega_i\in[\eta_i]_A$. The symbols $\rho_i$, $V_i$ and $E_i$ continue to denote, respectively, the canonical completion of $\omega_i$, the total volume of $\eta_i$, and the surface energy $$\int_{S_i}\rho_i\wedge\bar\rho_i.$$

At a product form $\omega_1+\omega_2$, the velocity $\xi_\omega$ in~\eqref{torsion_flow} is the sum of the two surface velocities. Indeed, the logarithmic volume ratio and the contraction by the background form split into sums of functions on $S_1$ and $S_2$, and the product Hodge Laplacian restricts to $\Delta_{\eta_i}$ on forms pulled back from $S_i$. Thus, the right-hand side of~\eqref{torsion_flow} preserves the product form at the level of its velocity.

\begin{lemma}\label{surface_dissipation}
Let $\omega(t)=\omega_1(t)+\omega_2(t)$ be a smooth positive product solution of equation~\eqref{torsion_flow} on $(X,\eta_1+\eta_2)$, with $\omega_i(t)\in[\eta_i]_A$. For each $i\in\{1,2\}$, let $\rho_i(t)$ be the canonical completion of $\omega_i(t)$. Then, it holds that
\begin{equation}\label{completion_heat}
\frac{\partial\rho_i}{\partial t}
=-\frac12\Delta_{\eta_i}\rho_i-\lambda\rho_i.
\end{equation}
If we write $$E_i(t)=\int_{S_i}\rho_i(t)\wedge\bar\rho_i(t)$$ and set
\begin{equation}\label{dissipation_rates}
D_i(t)=\|d\omega_i(t)\|_{L^2(\eta_i)}^2+2\lambda E_i(t),
\end{equation}
then $E_i'(t)=-D_i(t)$. In particular, the norm in~\eqref{dissipation_rates} is taken with respect to the fixed background K\"ahler form $\eta_i$, rather than the evolving Hermitian form $\omega_i(t)$.

For a fixed positive product form $\omega=\omega_1+\omega_2$, the same identities for the initial derivatives of $\rho_i$ and $E_i$ hold along the affine curve $\omega(s)=\omega+s\xi_\omega$ at $s=0$.
\end{lemma}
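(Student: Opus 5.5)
The plan is to identify the time derivative of the canonical completion directly, using the characterization from Lemma~\ref{surface_completion}. On a surface, the canonical completion of $\omega_i\in[\eta_i]_A$ is the unique $(2,0)$-form $\rho_i$ with $\partial\rho_i=0$ (automatic by degree), $\bar\partial\rho_i=-\partial\omega_i$, and $\int_{S_i}\rho_i\wedge\bar h=0$ for every holomorphic 2-form $h$. This pairing is metric-independent, so differentiating in $t$ at a fixed time shows that $\dot\rho_i:=\partial_t\rho_i$ is characterized by three conditions:
\begin{itemize}
\item[(a)] $\bar\partial\dot\rho_i=-\partial\xi_i$, where $\xi_i$ is the $S_i$-velocity;
\item[(b)] $\int_{S_i}\dot\rho_i\wedge\bar h=0$ for all holomorphic $h$;
\item[(c)] two forms with these properties differ by a holomorphic 2-form orthogonal to all holomorphic 2-forms, hence agree.
\end{itemize}
Smoothness in $t$ follows from the smooth dependence of $\rho_\omega$ on $\omega$ established earlier. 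Since only the velocity at one instant enters, the argument applies verbatim to the affine curve $\omega+s\xi_\omega$ at $s=0$.

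It then suffices to check that $\kappa_i:=-\tfrac12\Delta_{\eta_i}\rho_i-\lambda\rho_i$ satisfies (a) and (b). For (a), apply $\partial$ to the surface velocity. We have $\partial dd^c f=2i\partial\partial\bar\partial f=0$ and $\partial\eta_i=0$, and $\Delta_{\eta_i}$ commutes with $\partial$ and $\bar\partial$ on the Kähler surface. Therefore
$$
\partial\xi_i=-\tfrac12\Delta_{\eta_i}\partial\omega_i-\lambda\partial\omega_i .
$$
On the other hand, using $\bar\partial\rho_i=-\partial\omega_i$,
$$
\bar\partial\kappa_i=-\tfrac12\Delta_{\eta_i}\bar\partial\rho_i-\lambda\bar\partial\rho_i=\tfrac12\Delta_{\eta_i}\partial\omega_i+\lambda\partial\omega_i=-\partial\xi_i .
$$
For (b), the surface pairing equals the $L^2(\eta_i)$ inner product, so self-adjointness gives $\langle\Delta_{\eta_i}\rho_i,h\rangle=\langle\rho_i,\Delta_{\eta_i}h\rangle$. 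This vanishes because holomorphic forms are harmonic on a compact Kähler manifold, and the $\lambda\rho_i$ term is orthogonal already. Hence $\dot\rho_i=\kappa_i$, which is \eqref{completion_heat}.

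For the energy, note that $E_i=\|\rho_i\|^2_{L^2(\eta_i)}$, again by metric independence of the pairing. Then
$$
E_i'=2\operatorname{Re}\langle\dot\rho_i,\rho_i\rangle=-\langle\Delta_{\eta_i}\rho_i,\rho_i\rangle-2\lambda E_i .
$$
The remaining step, which I expect to be the only delicate one, is to show $\langle\Delta_{\eta_i}\rho_i,\rho_i\rangle=\|d\omega_i\|^2_{L^2(\eta_i)}$.

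I would use the Kähler identity $\Delta_d=2\Delta_{\bar\partial}$. For the $(2,0)$-form $\rho_i$ we have $\bar\partial^*\rho_i=0$ by type, so
$$
\langle\Delta_d\rho_i,\rho_i\rangle=2\langle\Delta_{\bar\partial}\rho_i,\rho_i\rangle=2\|\bar\partial\rho_i\|^2=2\|\partial\omega_i\|^2 .
$$
Since $\omega_i$ is real, $\bar\partial\omega_i=\overline{\partial\omega_i}$. The components $\partial\omega_i$ and $\bar\partial\omega_i$ have different bidegrees and are therefore orthogonal, giving $\|d\omega_i\|^2=\|\partial\omega_i\|^2+\|\bar\partial\omega_i\|^2=2\|\partial\omega_i\|^2$. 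This yields $E_i'=-\|d\omega_i\|^2_{L^2(\eta_i)}-2\lambda E_i=-D_i$, as claimed.
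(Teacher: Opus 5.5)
Your proposal is correct and follows essentially the same route as the paper: you characterize $\partial_t\rho_i$ by the differentiated completion equation together with the differentiated metric-independent orthogonality to holomorphic 2-forms, check that $-\frac12\Delta_{\eta_i}\rho_i-\lambda\rho_i$ satisfies both, and then differentiate $E_i$ in the fixed $L^2(\eta_i)$ pairing. The only difference is in proving $\langle\Delta_{\eta_i}\rho_i,\rho_i\rangle=\|d\omega_i\|^2_{L^2(\eta_i)}$: the paper uses self-duality of $(2,0)$-forms on a surface to get $\|d^*_{\eta_i}\rho_i\|=\|d\rho_i\|$, while you use $\Delta_d=2\Delta_{\bar\partial}$ together with $\bar\partial^*\rho_i=0$ by type, which is equally valid and does not depend on the surface dimension.
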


\begin{proof}
Fix an index $i\in\{1,2\}$. The K\"ahler Laplacian $\Delta_{\eta_i}$ commutes with $\partial$ and $\bar\partial$. Applying $\partial$ to the surface equation for $\omega_i(t)$ eliminates the $dd^c$ term. Since $\partial\rho_i=0$ and $\bar\partial\rho_i=-\partial\omega_i$, the candidate derivative $-\Delta_{\eta_i}\rho_i/2-\lambda\rho_i$ consequently satisfies
$$
\bar\partial\left(\frac12\Delta_{\eta_i}\rho_i+\lambda\rho_i\right)
=\partial\frac{\partial\omega_i}{\partial t}.
$$
The candidate derivative is also $\partial$-closed. Thus, the candidate derivative satisfies the two equations obtained by differentiating the completion equations for $\rho_i(t)$.

To verify the normalization of this derivative, let $h$ be any holomorphic 2-form on $S_i$. The canonical completion $\rho_i(t)$ is orthogonal to $h$ in the surface pairing $$\int_{S_i}\rho_i(t)\wedge\bar h,$$ which is independent of the Hermitian metric on $S_i$. Differentiating this orthogonality shows that $\partial\rho_i/\partial t$ is orthogonal to $h$ in the fixed $L^2(\eta_i)$ inner product. Moreover, $h$ is harmonic for the K\"ahler metric $\eta_i$, so self-adjointness of $\Delta_{\eta_i}$ shows that $-\Delta_{\eta_i}\rho_i/2-\lambda\rho_i$ is also orthogonal to $h$. The difference between the actual derivative and the candidate derivative is a holomorphic 2-form orthogonal to all holomorphic 2-forms on $S_i$. That difference is therefore zero, proving~\eqref{completion_heat}.

Every $(2,0)$-form on the K\"ahler surface $(S_i,\eta_i)$ is self-dual under the complex-linear extension of the real Hodge star associated with $\eta_i$. For $\rho_i$, self-duality gives $\|d_{\eta_i}^*\rho_i\|_{L^2(\eta_i)}=\|d\rho_i\|_{L^2(\eta_i)}$. Hence, integration by parts gives
\begin{equation}\label{surface_hodge_identity}
\langle\Delta_{\eta_i}\rho_i,\rho_i\rangle_{L^2(\eta_i)}
=2\|d\rho_i\|_{L^2(\eta_i)}^2
=\|d\omega_i\|_{L^2(\eta_i)}^2.
\end{equation}
For the last equality, the completion equations imply $d\rho_i=-\partial\omega_i$. The $(2,1)$-form $\partial\omega_i$ and the conjugate $(1,2)$-form $\bar\partial\omega_i$ have equal norms and are orthogonal in $L^2(\eta_i)$, so the squared norm of their sum $d\omega_i$ is twice the squared norm of $d\rho_i$.

The surface identity $E_i=\|\rho_i\|_{L^2(\eta_i)}^2$ allows us to differentiate the energy using only the fixed background pairing. By~\eqref{completion_heat} and~\eqref{surface_hodge_identity}, we obtain that
$$
E_i'=-\langle\Delta_{\eta_i}\rho_i,\rho_i\rangle_{L^2(\eta_i)}-2\lambda E_i=-D_i.
$$
The preceding argument uses the evolution equation only to specify the initial derivative of $\omega_i$. For the affine curve through $\omega$ with initial velocity $\xi_\omega$, the initial derivative of each factor is the same surface velocity. The formulas for the derivatives of the canonical completions and the surface energies therefore hold at $s=0$ along that affine curve as well.
\end{proof}

\subsection{The negative Hessian formula}

\begin{theorem}\label{flow_instability}
Let $(S_1,\eta_1)$ and $(S_2,\eta_2)$ be compact K\"ahler surfaces, and set $X=S_1\times S_2$ and $\eta=\eta_1+\eta_2$. Let $\omega=\omega_1+\omega_2$, with $\omega_i\in[\eta_i]_A$, be a non-K\"ahler critical point of the Dinew--Popovici functional $\mathcal{F}_X$ on the full space of positive representatives of the Aeppli class $[\eta]_A$. For this fixed metric $\omega$, let $D_i$ be the number in~\eqref{dissipation_rates} evaluated at $\omega_i$, and let $\xi_\omega$ be the velocity in~\eqref{torsion_flow} with background form $\eta$. Then, the Hessian of $\mathcal{F}_X$ at $\omega$ satisfies
\begin{equation}\label{negative_hessian}
H_\omega(\xi_\omega,\xi_\omega)=-4D_1D_2<0.
\end{equation}
In particular, the same Hessian direction satisfies the quantitative estimate
\begin{equation}\label{quantitative_hessian}
H_\omega(\xi_\omega,\xi_\omega)\leq-8\lambda^2\mathcal{F}_X(\omega).
\end{equation}
\end{theorem}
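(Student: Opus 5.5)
Since $\omega$ is critical in its full Aeppli class, the second derivative of $\mathcal{F}_X$ along the affine curve $\omega(s)=\omega+s\xi_\omega$ at $s=0$ equals $H_\omega(\xi_\omega,\xi_\omega)$; the chain-rule correction term vanishes by criticality. The plan is therefore to compute this second derivative directly from the product energy formula~\eqref{product_energy}. Along the affine curve each factor evolves separately, $\omega_i(s)=\omega_i+s\xi_i$, where $\xi_i$ is the surface velocity of~\eqref{torsion_flow} on $(S_i,\eta_i)$. The curve stays a product curve, and $\omega_i(s)\in[\eta_i]_A$ remains positive for small $|s|$. Lemma~\ref{surface_completion} then applies for every $s$, giving
$$
\mathcal{F}_X(\omega(s))=V_2E_1(s)+V_1E_2(s)-2E_1(s)E_2(s).
$$

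Differentiating twice at $s=0$ yields
$$
(V_2-2E_2)E_1''+(V_1-2E_1)E_2''-4E_1'E_2'.
$$
The key step is to show that the coefficients $V_j-2E_j$ vanish at a non-K\"ahler critical product. By Theorem~\ref{critical_products}, the equalities~\eqref{critical_densities} hold, so $\mu_j=\nu_j$ pointwise. Integrating and using~\eqref{surface_volume} gives $V_j-E_j=E_j$, that is, $V_j=2E_j$. The second derivatives $E_i''$, which would involve the affine curve's second-order behaviour and not the flow, therefore drop out. This is the point where criticality is essential, and it is also the main obstacle, since otherwise one would need to compute $E_i''$ along a straight line rather than along the flow. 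The remaining term is $-4E_1'(0)E_2'(0)$. By the last assertion of Lemma~\ref{surface_dissipation}, along the affine curve one has $E_i'(0)=-D_i$, so $H_\omega(\xi_\omega,\xi_\omega)=-4D_1D_2$.

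For strict negativity, non-K\"ahlerness together with~\eqref{critical_densities} gives $E_i=V_i/2>0$ for both $i$. Hence $D_i\geq 2\lambda E_i>0$, which proves~\eqref{negative_hessian}.

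For the quantitative bound~\eqref{quantitative_hessian}, substitute $V_i=2E_i$ into~\eqref{product_energy} to get $\mathcal{F}_X(\omega)=2E_1E_2+2E_1E_2-2E_1E_2=2E_1E_2$. Then
$$
-4D_1D_2\leq -4(2\lambda E_1)(2\lambda E_2)=-16\lambda^2E_1E_2=-8\lambda^2\mathcal{F}_X(\omega).
$$
The only technical check beyond this is the smoothness of $E_i(s)$ in $s$, which follows from the smooth dependence of the canonical completion on the metric.
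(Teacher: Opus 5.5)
Your proposal is correct and matches the paper's proof: both differentiate \eqref{product_energy} twice along the affine curve $\omega+s\xi_\omega$, use $E_i=V_i/2$ (from \eqref{critical_densities} and \eqref{surface_volume}) to remove the $E_i''$ terms, insert $E_i'(0)=-D_i$ from Lemma~\ref{surface_dissipation}, and obtain \eqref{quantitative_hessian} from $D_i\geq 2\lambda E_i$ together with $\mathcal{F}_X(\omega)=2E_1E_2=V_1V_2/2$. One small nit: along an affine curve the second derivative equals $H_\omega(\xi_\omega,\xi_\omega)$ by definition (there is no chain-rule correction), so criticality is needed only to make the coefficients $V_j-2E_j$ vanish.
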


\begin{proof}
Theorem~\ref{critical_products} gives $\rho_i\wedge\bar\rho_i=\omega_i^2/2$ for the canonical completion of each surface metric $\omega_i$. Integrating these two pointwise equalities and using~\eqref{surface_volume}, we obtain that $E_i=V_i/2$.

Consider the affine product curve $\omega(s)=\omega+s\xi_\omega$, which passes through $\omega$ at $s=0$, remains in $[\eta]_A$, and is positive for sufficiently small $|s|$. Denote the two surface energies along this curve by $E_1(s)$ and $E_2(s)$. The constants $V_1$ and $V_2$ are fixed by the background forms $\eta_1$ and $\eta_2$. Differentiating the product energy identity~\eqref{product_energy} twice along this curve gives
$$
\frac{d^2}{ds^2}\mathcal{F}_X(\omega(s))
=(V_2-2E_2(s))E_1''(s)+(V_1-2E_1(s))E_2''(s)-4E_1'(s)E_2'(s).
$$
At $s=0$, the coefficients $V_2-2E_2(0)$ and $V_1-2E_1(0)$ vanish. Lemma~\ref{surface_dissipation} gives $E_i'(0)=-D_i$ for each factor. Since the curve is affine with velocity $\xi_\omega$, the second derivative at $s=0$ is $H_\omega(\xi_\omega,\xi_\omega)$, proving~\eqref{negative_hessian}.

For the quantitative bound, we have $D_i\geq2\lambda E_i=\lambda V_i$. The product energy identity at the critical metric gives $\mathcal{F}_X(\omega)=V_1V_2/2$. Substitution of all these inequalities into~\eqref{negative_hessian} yields~\eqref{quantitative_hessian}.
\end{proof}

The strict negativity in~\eqref{negative_hessian} also holds if the damping parameter is set to $\lambda=0$. Indeed, Theorem~\ref{critical_products} implies that each surface metric $\omega_i$ is non-K\"ahler, so neither exterior torsion form $d\omega_i$ is identically zero. Consequently, each number $D_i=\|d\omega_i\|_{L^2(\eta_i)}^2$ is positive. Thus, the negative second variation is not caused solely by the damping term. The proof of Theorem~\ref{flow_instability} is a static calculation along the explicitly defined affine curve $\omega+s\xi_\omega$; no existence theorem for a solution of the evolution equation is needed.

On the product K\"ahler background $(X,\eta_1+\eta_2)$, the local minima of $\mathcal{F}_X$ restricted to product metrics $\omega_1+\omega_2$, with $\omega_i\in[\eta_i]_A$, are precisely the K\"ahler product metrics. Indeed, independently replacing $v_i$ by $(1+s_i)v_i$ varies $E_i$ as $(1+s_i)^2E_i$. Formula~\eqref{product_energy} shows that a restricted critical product either has $E_1=E_2=0$ or satisfies $E_i=V_i/2$ for both indices. In the latter case, the mixed second derivative of the product energy as a function of $(E_1,E_2)$ is $-2$, and its two pure second derivatives vanish. Since the two positive energies can be varied independently, this Hessian has a positive and a negative direction among product variations. If $E_1=E_2=0$, both factors are K\"ahler and $\mathcal{F}_X=0$, which is an absolute minimum.

The construction also gives non-K\"ahler critical metrics in every complex dimension at least four. Let $\omega$ be a non-K\"ahler critical product metric in Theorem~\ref{flow_instability}, and let $(Y,\theta)$ be a compact K\"ahler manifold of complex dimension $k\geq1$. On the product K\"ahler background $(X\times Y,\eta+\theta)$, the form $\omega+\theta$ is critical for $\mathcal{F}_{X\times Y}$ in the full Aeppli class $[\eta+\theta]_A$. We write
$$
V(\theta)=\frac{1}{k!}\int_Y\theta^k
$$
for the total volume of $(Y,\theta)$. Expansion of~\eqref{functional} using the canonical completion described below gives
$$
\mathcal{F}_{X\times Y}(\omega+\theta)=V(\theta)\mathcal{F}_X(\omega).
$$
Indeed, the pullback of $\rho_\omega$ is the canonical completion of $\omega+\theta$: orthogonality to holomorphic 2-forms follows from canonicality on $X$ and the orthogonality of the product types. For an arbitrary smooth $(1,0)$-form $u$ on $X\times Y$, we decompose $u$ into components with covectors on $X$ and $Y$. We use $\rho_\omega+t\partial u$ when differentiating the energy along the variation $\partial\bar u+\bar\partial u$. The mixed components contribute zero to the first variation by their separate bidegrees. The components supported on $X$ give the first variation on each fibre $X\times\{y\}$, which vanishes by full criticality on $X$. The components supported on $Y$ give the variation of the $Y$ volume, weighted by the energy density on $X$. Their fibre integrals vanish by Stokes' theorem, since $d\theta=0$. This proves full criticality even when the coefficients of $u$ depend on both factors. The flow velocity at $\omega+\theta$, with background $\eta+\theta$, is the pullback of $\xi_\omega$. The same energy identity holds along the corresponding affine variation, so Theorem~\ref{flow_instability} gives
$$
H_{\omega+\theta}(\xi_\omega,\xi_\omega)=-4V(\theta)D_1D_2<0.
$$

The velocity $\xi_\omega$ thus supplies a distinguished negative Hessian direction at every non-K\"ahler critical product metric, with the choice of direction determined by the fixed K\"ahler background and the damping parameter. The Hessian value in that direction is expressed exactly in terms of the two surface torsion quantities $D_1$ and $D_2$. However, the nonlinear $dd^c$ correction in~\eqref{torsion_flow} does not enter the completion equation~\eqref{completion_heat}. Replacing that correction by any other $dd^c$ correction which preserves the product form leaves the surface completion derivative unchanged, provided that the heat and damping terms are kept fixed. Consequently, the negative Hessian identity~\eqref{negative_hessian} does not distinguish the particular nonlinear correction in~\eqref{torsion_flow} from all other product-preserving $dd^c$ corrections.

\section{Potential Variations on K\"ahler 4-folds}\label{section_potentials}

Throughout this section, $(X,\eta)$ is a compact K\"ahler manifold of complex dimension four, and $\omega$ is a Hermitian symplectic form on the complex manifold $X$. We do not assume that the Aeppli class $[\omega]_A$ contains the background K\"ahler form $\eta$. We consider the Dinew--Popovici functional $\mathcal{F}_X$ on the positive representatives of $[\omega]_A$, write $\rho=\rho_\omega$ for the canonical completion of $\omega$, and also set
$$
V(\omega)=\frac{1}{4!}\int_X\omega^4
$$
for the total volume of the Hermitian metric $\omega$.

For a real smooth function $\phi$ on $X$, we call $\omega_\phi=\omega+dd^c\phi$ a positive potential variation of $\omega$ if $\omega_\phi>0$. The forms $\omega_\phi$ and $\omega$ have the same Aeppli class and the same exterior torsion form, since $ddd^c\phi=0$. In particular, every positive potential variation of $\omega$ lies in the domain of $\mathcal{F}_X$ under consideration. When differentiating at $\omega$, every real smooth function $\phi$ is allowed: the positivity of $\omega+tdd^c\phi$ holds for all sufficiently small real $t$.

\subsection{The completion and total volume}

As usual, a Hermitian form $\omega$ on an $n$-dimensional complex manifold is called Gauduchon if $dd^c\omega^{n-1}=0$. We say that $\omega$ is stationary under potential variations if the derivative of $\mathcal{F}_X(\omega+tdd^c\phi)$ at $t=0$ vanishes for every real smooth function $\phi$ on $X$.

\begin{theorem}\label{potential_volume}
Let $\omega$ be a Hermitian symplectic form on the compact K\"ahler 4-fold $(X,\eta)$. For every positive potential variation $\omega_\phi=\omega+dd^c\phi$, the canonical completion of $\omega_\phi$ equals the canonical completion of $\omega$, and it holds that
\begin{equation}\label{energy_volume_identity}
\mathcal{F}_X(\omega_\phi)+V(\omega_\phi)=\mathcal{F}_X(\omega)+V(\omega).
\end{equation}
Moreover, the form $\omega$ is stationary for $\mathcal{F}_X$ under all potential variations if and only if $dd^c\omega^3=0$. Consequently, every critical point of $\mathcal{F}_X$ in the full space of positive representatives of $[\omega]_A$ is Gauduchon.
\end{theorem}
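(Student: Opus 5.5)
First I will show that the canonical completion does not change. Since $d(dd^c\phi)=0$, we have $\partial\omega_\phi=\partial\omega$, so $\rho_\omega$ satisfies the completion equations $\partial\rho=0$, $\bar\partial\rho=-\partial\omega_\phi$ for $\omega_\phi$. The difficulty is that in dimension four the orthogonality $\langle\rho,h\rangle_{L^2(\omega_\phi)}=0$ to holomorphic 2-forms $h$ involves the weight $\omega_\phi^2$, which does change. Up to a constant,
$$
\langle\rho,h\rangle_{L^2(\omega)}=\int_X\rho\wedge\bar h\wedge\omega^2 .
$$
Replacing $\omega$ by $\omega+dd^c\phi$, the difference equals
$$
\int\rho\wedge\bar h\wedge(2\omega\wedge dd^c\phi+(dd^c\phi)^2).
$$
For $(dd^c\phi)^2$ I write $dd^c\phi\wedge dd^c\phi=d(d^c\phi\wedge dd^c\phi)$ and use $d(\rho\wedge\bar h)=d\rho\wedge\bar h=\bar\partial\rho\wedge\bar h$, since $\bar h$ is closed. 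This reduces everything to terms like $\int d^c\phi\wedge\partial\omega\wedge\bar h\wedge dd^c\phi$. Bidegree counting then has to kill them: $\bar h$ is $(0,2)$ and $\partial\omega$ is $(2,1)$, so the remaining factor must supply $(2,1)$. The cleaner route is to pass to the closed form $\Theta=\omega+\rho+\bar\rho$ and note that $\rho\wedge\bar h\wedge\omega^k$ is the $(4,4)$-part of $\bar h\wedge\Theta^{k+1}$, up to combinatorics. Indeed, $\bar h\wedge\Theta^3$ has type components, and its $(4,4)$ piece is $3\bar h\wedge\rho\wedge\omega^2$ plus $\bar h\wedge\rho^2\wedge\bar\rho$-type terms, which have the wrong bidegree unless they match. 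Since $\bar h$ and $\Theta$ are closed and $\omega_\phi$ changes $\Theta$ by an exact form, the integrals $\int\bar h\wedge\Theta^3$ are invariant. I expect this bidegree bookkeeping, showing that the extra cubic terms either vanish or are themselves invariant, to be the main obstacle. I would settle it by writing $\Theta_\phi=\Theta+d(d^c\phi)$, expanding, and integrating each exact piece by Stokes.

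Next I will prove \eqref{energy_volume_identity} with the same device. Consider $\Theta^4/4!$ for the closed real form $\Theta=\omega+\rho+\bar\rho$. Its $(4,4)$-part is
$$
\frac{\omega^4}{4!}+\frac{\rho\wedge\bar\rho\wedge\omega^2}{2}+\frac{(\rho\wedge\bar\rho)^2}{4}\cdot\frac{4!}{2!\,2!\,4!}\cdot 4!\ (\text{suitably normalised}),
$$
so that
$$
\frac{1}{4!}\int_X\Theta^4=V(\omega)+\mathcal{F}_X(\omega)+c\int_X\rho^2\wedge\bar\rho^2 .
$$
Since $\rho$ is unchanged and $\Theta_\phi-\Theta=dd^c\phi$ is exact, the left side and the last term are invariant. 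This gives \eqref{energy_volume_identity}.

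Finally, stationarity follows directly. The derivative of $\mathcal{F}_X(\omega+tdd^c\phi)$ equals $-\frac{d}{dt}V=-\frac{1}{3!}\int dd^c\phi\wedge\omega^3=-\frac16\int\phi\,dd^c\omega^3$. This vanishes for all $\phi$ if and only if $dd^c\omega^3=0$. A full critical point is in particular stationary under the potential variations $dd^c\phi=\partial\bar u+\bar\partial u$ with $u=-i\partial\phi$ (up to sign convention), so it is Gauduchon.
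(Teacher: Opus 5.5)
Your proposal is correct. For the energy--volume identity and for the stationarity and Gauduchon statements, it is the paper's argument. You expand $\Theta^4/4!$, note that the de Rham class and the term $\int_X\rho^2\wedge\bar\rho^2$ are unchanged, and then differentiate $V$. Your choice $u=-i\partial\phi$ does give $\partial\bar u+\bar\partial u=dd^c\phi$.

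The step you flag as the main obstacle closes at once. The $(4,4)$-component of $\bar h\wedge\Theta^3$ is exactly $3\,\bar h\wedge\rho\wedge\omega^2+3\,\bar h\wedge\rho^2\wedge\bar\rho$. The second term does not involve $\omega$, so with $\rho$ held fixed it is unchanged under $\omega\mapsto\omega_\phi$. Since $\int_X\bar h\wedge\Theta_\phi^3=\int_X\bar h\wedge\Theta^3$ by Stokes ($d\bar h=0$, $d\Theta=0$, $\Theta_\phi-\Theta=dd^c\phi$), you get $\int_X\rho\wedge\bar h\wedge\omega_\phi^2=\int_X\rho\wedge\bar h\wedge\omega^2=0$, as required.

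Two small corrections. First, your direct route would not close by bidegree counting. Terms such as $\partial\omega\wedge\bar h\wedge\omega\wedge\partial\phi$ have type $(4,4)$ and must be removed by a further integration by parts using $\bar\partial\rho=-\partial\omega$. Second, the coefficient of $\rho^2\wedge\bar\rho^2$ in $\Theta^4/4!$ is simply $1/4$.

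The genuine difference from the paper lies in the invariance of the canonical completion. The paper never invokes the orthogonality criterion. It expands $\Theta_\zeta^4$ for an \emph{arbitrary} completion $\zeta$ and deduces that $\|\zeta\|^2_{L^2(\omega_\phi)}-\|\zeta\|^2_{L^2(\omega)}=V(\omega)-V(\omega_\phi)$ is independent of $\zeta$. Hence the two quadratic functionals on the affine space of completions have the same minimizer, and the energy identity follows by evaluating at $\zeta=\rho_\omega$.

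Your $\bar h\wedge\Theta^3$ identity is the polarized, first-order version of this. Differentiating the paper's expansion in $\bar s$ at $s=0$ along $\zeta=\rho+sh$ gives exactly $\frac16\int_X\bar h\wedge\Theta^3=\frac12\int_X\rho\wedge\bar h\wedge\omega^2+\frac12\int_X\rho^2\wedge\bar\rho\wedge\bar h$. The paper's route is more economical, since one expansion yields both claims at once. Yours verifies the Euler--Lagrange condition directly, but then needs a separate $\Theta^4$ computation for the energy identity.
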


\begin{proof}
Let $\zeta$ be an arbitrary completion of $\omega$, and set $\Theta_\zeta=\omega+\zeta+\bar\zeta$. The real 2-form $\Theta_\zeta$ is closed by the definition of a completion. Expanding the fourth exterior power of $\Theta_\zeta$ and retaining the terms of bidegree $(4,4)$, we obtain that
\begin{equation}\label{completed_volume}
\frac{1}{4!}\int_X\Theta_\zeta^4
=V(\omega)+\frac{1}{2}\int_X\zeta\wedge\bar\zeta\wedge\omega^2
+\frac14\int_X\zeta^2\wedge\bar\zeta^{2}.
\end{equation}
The middle term on the right-hand side of~\eqref{completed_volume} is the squared $L^2(\omega)$ norm of the chosen completion $\zeta$.

The forms $\omega_\phi$ and $\omega$ have exactly the same completions. Indeed, the completion equations depend on the Hermitian form only through $\partial\omega$, and $\partial\omega_\phi=\partial\omega$. For the same chosen completion $\zeta$, the closed real form associated with $\omega_\phi$ is $\Theta_\zeta+dd^c\phi$. This form represents the same de Rham cohomology class as $\Theta_\zeta$. Hence, replacing $\omega$ by $\omega_\phi$ in~\eqref{completed_volume} leaves the left-hand side unchanged. The last term in~\eqref{completed_volume} also remains unchanged, since the form $\zeta$ has been kept fixed. Subtracting the two completed-volume identities, we conclude that
$$
\|\zeta\|_{L^2(\omega_\phi)}^2
-\|\zeta\|_{L^2(\omega)}^2
=V(\omega)-V(\omega_\phi).
$$
The right-hand side of this equality is independent of the chosen completion $\zeta$. Thus, the two squared norms, considered as functions on the common space of completions, differ by a constant. The unique completion minimizing the $L^2(\omega)$ norm therefore also minimizes the $L^2(\omega_\phi)$ norm. This proves that $\rho_{\omega_\phi}=\rho_\omega$, and evaluating the preceding equality at this common canonical completion proves~\eqref{energy_volume_identity}.

For an arbitrary real smooth function $\phi$, we now apply~\eqref{energy_volume_identity} to the curve $\omega+tdd^c\phi$. Differentiating at $t=0$ and integrating by parts, we obtain that
\begin{equation}\label{potential_first_variation}
\left.\frac{d}{dt}\mathcal{F}_X(\omega+tdd^c\phi)\right|_{t=0}
=-\frac16\int_X\omega^3\wedge dd^c\phi
=-\frac16\int_X\phi\,dd^c\omega^3.
\end{equation}
Since $\phi$ is arbitrary, the derivative in~\eqref{potential_first_variation} vanishes for every potential direction precisely when $dd^c\omega^3=0$. Every critical point in the full Aeppli class is stationary under these particular variations, which proves the final assertion.
\end{proof}

For each chosen completion $\zeta$, the proof preserves the de Rham class of the closed real form $\omega+\zeta+\bar\zeta$ along the potential variation. The argument neither requires this de Rham class to be independent of $\zeta$ nor identifies the class of $\omega+\rho_\omega+\bar\rho_\omega$ with the background class $[\eta]$.

\subsection{The potential Hessian}

We consider the restriction of the affine Hessian $H_\omega$ of $\mathcal{F}_X$ to the real vector space of potential directions $dd^c\phi$, where $\phi$ ranges over the real smooth functions on $X$. We call this restricted quadratic form the potential Hessian at $\omega$. A real quadratic form has infinite positive index if it is positive definite on subspaces of arbitrarily large finite dimension. Infinite negative index means that the quadratic form is negative definite on subspaces of arbitrarily large finite dimension. In the following theorem, these subspaces consist of potential directions at the fixed Hermitian form $\omega$.

\begin{theorem}\label{potential_index}
Let $\omega$ be a Hermitian symplectic form on the compact K\"ahler 4-fold $(X,\eta)$. If $dd^c\omega^2\ne0$, then the potential Hessian of $\mathcal{F}_X$ at $\omega$ has infinite positive and negative index; if $dd^c\omega^2=0$, then $\mathcal{F}_X(\omega_\phi)=\mathcal{F}_X(\omega)$ for every positive potential variation $\omega_\phi=\omega+dd^c\phi$.
\end{theorem}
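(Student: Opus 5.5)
By Theorem~\ref{potential_volume}, along every positive potential variation the energy satisfies $\mathcal{F}_X(\omega_\phi)=\mathcal{F}_X(\omega)+V(\omega)-V(\omega_\phi)$. The whole statement therefore reduces to studying the volume functional $\phi\mapsto V(\omega+dd^c\phi)$. The potential Hessian is minus the second variation of $V$.

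We expand
$$
V(\omega+tdd^c\phi)=\frac1{24}\int_X(\omega+tdd^c\phi)^4 .
$$
The coefficient of $t^2$ is $\frac14\int_X\omega^2\wedge dd^c\phi\wedge dd^c\phi$. Two integrations by parts give
$$
\int_X\omega^2\wedge dd^c\phi\wedge dd^c\phi=\int_X\phi\,dd^c\phi\wedge dd^c\omega^2 .
$$
Here one uses that $dd^c\phi$ is closed, and moves $d$ and then $d^c$ across. The only care needed is the sign conventions for $d^c$, and the fact that $d(\omega^2)\wedge d^c\phi$-type terms recombine, since $d\,dd^c\phi=0$. Hence
$$
H_\omega(dd^c\phi,dd^c\phi)=-\frac12\int_X\phi\,dd^c\phi\wedge dd^c\omega^2 .
$$
The exact constant is irrelevant.

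\emph{The case $dd^c\omega^2=0$.} If $dd^c\omega^2=0$, the same integrations by parts apply to every higher-order term in the expansion of $(\omega+tdd^c\phi)^4$, since each is $\int\omega^{4-k}\wedge(dd^c\phi)^k$. For $k=1$ the term is $\int\phi\,dd^c\omega^3$. For $k\ge2$ the terms are $\int\phi\,(dd^c\phi)^{k-1}\wedge dd^c\omega^{4-k}$, and for $k=4$ the term $\int(dd^c\phi)^4$ vanishes by Stokes.

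These vanish once we know $dd^c\omega^3=0$ and $dd^c\omega=0$. The latter holds because $\omega$ is pluriclosed. For the former, expand
$$
dd^c\omega^3=3\,dd^c\omega\wedge\omega^2+6\,d\omega\wedge d^c\omega\wedge\omega .
$$
Compare this with
$$
dd^c\omega^2=2\,dd^c\omega\wedge\omega+2\,d\omega\wedge d^c\omega ,
$$
up to sign conventions. Using $dd^c\omega=0$, this gives $dd^c\omega^3=\tfrac32\,\omega\wedge dd^c\omega^2=0$.

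So $V(\omega_\phi)=V(\omega)$ is a polynomial in $t$ with vanishing nonconstant coefficients, valid for the full segment $t=1$. Positivity along the segment is not needed, since $V$ is just the integral. Therefore $\mathcal{F}_X(\omega_\phi)=\mathcal{F}_X(\omega)$.

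\emph{The case $dd^c\omega^2\neq0$.} Write $\beta=dd^c\omega^2$, a real $(3,3)$-form with $\beta\neq0$. The quadratic form is $Q(\phi)=\int\phi\,dd^c\phi\wedge\beta$. At a point $p$ where $\beta\ne0$, contraction gives a nonzero real Hermitian $(1,1)$-tensor $B$ with
$$
dd^c\phi\wedge\beta=c\,\langle B,i\partial\bar\partial\phi\rangle\,\mu .
$$
$B$ need not be definite, but it has some nonzero eigenvalue at $p$, so some direction $e$ has $B(e,\bar e)\neq0$.

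We test $Q$ on localized oscillations $\phi_k=\chi\cos(k\,x\cdot\xi)$ supported in small disjoint balls near $p$, with frequency direction $\xi$ chosen to be nondegenerate for $B$. Integrating by parts,
$$
Q(\phi)=-\int d\phi\wedge d^c\phi\wedge\beta+\text{(lower order)} .
$$
The leading part is $-k^2\int\chi^2\,B(\xi,\xi)\sin^2(\cdot)+O(k)$, where $B(\xi,\xi)$ is the real quadratic form in $\xi$ induced by $B$. Choosing $\xi$ with $B(\xi,\xi)>0$, respectively $<0$, gives functions with $Q$ of a definite sign for large $k$.

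If $B$ is semidefinite, say $B\ge0$ and nonzero, the $\xi$ with $B(\xi,\xi)>0$ still exist, and a second sign must come differently. In that case I would exploit the lower-order term $\int\phi\,\langle B,\cdot\rangle$, or the freedom that near a boundary point of $\operatorname{supp}\beta$ the form $\beta$ changes. The expectation is that an exact form $\beta=dd^c\omega^2$ integrates against $\eta$-type positive test data to zero: $\int\beta\wedge\eta=0$ by Stokes. So $\beta$ cannot be semipositive everywhere without vanishing, and hence regions of both signs of $\operatorname{tr}_\eta$ exist. Regions where $\operatorname{tr}B>0$ and where $\operatorname{tr}B<0$ then give frequency directions of each sign.

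Finally, $N$ such bumps in disjoint balls are $Q$-orthogonal, because $Q$ is local, so $Q$ is definite on their $N$-dimensional span. This gives infinite positive and negative index.

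The main obstacle is this sign-production step: guaranteeing a point and direction of each sign for $B$. My plan is to use $\int_X\beta\wedge\eta=0$, together with $\beta\ne0$, to get the trace of $B$ taking both signs. The symbol computation itself is routine.
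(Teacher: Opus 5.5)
Your proposal is correct and follows essentially the paper's route: the energy--volume identity of Theorem~\ref{potential_volume} reduces everything to $V$, the potential Hessian is $-\frac12\int_X\omega^2\wedge(dd^c\phi)^2=\frac12\int_X dd^c\omega^2\wedge d\phi\wedge d^c\phi$, the two signs come from a Stokes identity $\int_X dd^c\omega^2\wedge\gamma=0$ (the paper takes $\gamma=\omega$, you take $\gamma=\eta$, and either works), and disjointly supported high-frequency bumps give the infinite indices; for $dd^c\omega^2=0$, your direct expansion of $V(\omega+dd^c\phi)$ is equivalent to the paper's degree-two polynomial argument. Two corrections: first, $\int_X\beta\wedge\eta=0$ does not force $\operatorname{tr}_\eta B$ to take both signs, since it may vanish identically. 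What it does show, applied to $\pm\beta$, is that $B$ cannot be semidefinite of one sign everywhere unless $\beta\equiv0$, and that alone gives a point and covector of each sign, exactly as in the paper; second, the constant in $dd^c\omega^3=\tfrac32\,\omega\wedge dd^c\omega^2$ should be $3$, which is harmless.
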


\begin{proof}
Fix a real smooth function $\phi$ on $X$. Since the Hermitian symplectic form $\omega$ is pluriclosed, Stokes' theorem gives
$$
\displaystyle\int_X\omega\wedge(dd^c\phi)^3=0,\qquad
\displaystyle\int_X(dd^c\phi)^4=0.
$$
These are precisely the coefficients, up to fixed nonzero constants, of the cubic and quartic terms in $V(\omega+tdd^c\phi)$. Thus, the total volume $V(\omega+tdd^c\phi)$ is a polynomial of degree at most two in $t$. By~\eqref{energy_volume_identity}, the energy $\mathcal{F}_X(\omega+tdd^c\phi)$ has the same degree bound whenever the form $\omega+tdd^c\phi$ is positive. Differentiating the energy-volume identity twice at $t=0$, we obtain that
\begin{equation}\label{potential_hessian}
\begin{split}
H_\omega(dd^c\phi,dd^c\phi)
&=-\frac12\int_X\omega^2\wedge(dd^c\phi)^2\\
&=\frac12\int_Xdd^c\omega^2\wedge d\phi\wedge d^c\phi.
\end{split}
\end{equation}
For the second equality in~\eqref{potential_hessian}, we integrate by parts and use $d\phi\wedge d^c\phi=2i\partial\phi\wedge\bar\partial\phi$. The left-hand side denotes the affine second derivative defined in Section~\ref{section_products}; the equality does not require $\omega$ to be a critical point of $\mathcal{F}_X$.

We set $\Psi=dd^c\omega^2$, which is a real $(3,3)$-form on $X$. Applying Stokes' theorem and using $dd^c\omega=0$, we obtain that
$$
\int_X\Psi\wedge\omega=\int_X\omega^2\wedge dd^c\omega=0.
$$
Suppose first that $\Psi\ne0$. At each point $p\in X$, division of $\Psi\wedge i\zeta\wedge\bar\zeta$ by the positive volume form $\omega^4/4!$ defines a Hermitian quadratic form in the covector $\zeta\in T_p^*X^{1,0}$. These pointwise quadratic forms must take a strictly positive value somewhere on $X$ and a strictly negative value somewhere on $X$.

To prove this assertion, suppose that the pointwise quadratic forms were nonnegative for every $p$ and every $\zeta$. The traces of these pointwise quadratic forms, computed in $\omega$-unitary coframes, are positive constant multiples of the densities defined by $\Psi\wedge\omega$. Since $\Psi$ is not identically zero, the associated Hermitian quadratic form is nonzero at some point. Nonnegativity then makes the trace strictly positive at that point, and hence on a nonempty open set. This contradicts $$\int_X\Psi\wedge\omega=0.$$ The possibility that all the pointwise quadratic forms are nonpositive is excluded by the same argument applied to $-\Psi$. This proves the assertion about the signature.

Choose either of the two strict signs. At a point and a covector realizing the chosen sign, a complex-linear change of holomorphic coordinates allows us to take that covector to be a multiple of $dz_1$. By shrinking the coordinate neighbourhood, we obtain a coordinate ball on which $\Psi\wedge dx\wedge d^cx$ has the chosen strict sign, where $x=\operatorname{Re}z_1$. Let $a$ be a nonzero real smooth function compactly supported in this ball, and set $\phi_N=a\cos(Nx)$ for every positive integer $N$, extending $\phi_N$ by zero outside the ball. Formula~\eqref{potential_hessian} gives
$$
H_\omega(dd^c\phi_N,dd^c\phi_N)
=\frac{N^2}{2}\int_Xa^2\sin^2(Nx)\Psi\wedge dx\wedge d^cx+O(N).
$$
Here the bound implicit in $O(N)$ may depend on $\omega$, the chosen coordinates and the function $a$, but not on $N$. The identity $\sin^2(Nx)=(1-\cos(2Nx))/2$, followed by integration by parts in the coordinate $x$, shows that the integral containing $\sin^2(Nx)$ converges to
$$
\frac12\int_Xa^2\Psi\wedge dx\wedge d^cx.
$$
This limit is nonzero and has the chosen sign. Consequently, the potential Hessian evaluated at $dd^c\phi_N$ has the chosen sign for every sufficiently large $N$.

For any positive integer $k$, choose $k$ disjoint smaller coordinate balls inside a region with the chosen strict sign. The preceding construction produces one potential direction on each smaller ball with a nonzero Hessian value of that sign. The resulting directions have disjoint supports. By polarization of the last expression in~\eqref{potential_hessian}, the Hessian pairing of any two different directions is therefore zero. Each direction is nonzero, since its Hessian value is nonzero, and the disjoint supports make the $k$ directions linearly independent. The potential Hessian is thus definite with the chosen sign on the $k$-dimensional span of these potential directions. Repeating the construction in a region with the opposite sign proves that the potential Hessian has infinite positive and negative index. All these are admissible infinitesimal variations: on each of the finite-dimensional spans just constructed, the form obtained by adding a sufficiently small variation to $\omega$ remains positive.

Suppose now that $\Psi=0$, that is, $dd^c\omega^2=0$. The pluriclosedness of $\omega$ gives the identities
\begin{equation}\label{power_identities}
\begin{cases}
dd^c\omega^2=4i\partial\omega\wedge\bar\partial\omega,\\
dd^c\omega^3=3\omega\wedge dd^c\omega^2=0.
\end{cases}
\end{equation}
For every real smooth function $\phi$, the first derivative in~\eqref{potential_first_variation} and the second derivative in~\eqref{potential_hessian} therefore vanish. Since $\mathcal{F}_X(\omega+tdd^c\phi)$ is a polynomial of degree at most two on its interval of positivity, this polynomial is constant. If $\omega_\phi>0$, then every form on the segment from $\omega$ to $\omega_\phi$ is positive, because the cone of positive $(1,1)$-forms is convex. Taking $t=1$ proves $\mathcal{F}_X(\omega_\phi)=\mathcal{F}_X(\omega)$ for every positive potential variation, as required.
\end{proof}

\begin{corollary}\label{astheno_minimum}
Let $(X,\eta)$ be a compact K\"ahler 4-fold, and let $\omega$ be a local minimizer of the Dinew--Popovici functional $\mathcal{F}_X$ on the full space of positive representatives of the Hermitian symplectic Aeppli class $[\omega]_A$. Then, it holds that $dd^c\omega^2=0$.
\end{corollary}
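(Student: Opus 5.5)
The plan is to argue by contraposition using Theorem~\ref{potential_index}. Let $\omega$ be a local minimizer of $\mathcal{F}_X$ on the positive representatives of $[\omega]_A$. The potential variations $\omega+t\,dd^c\phi$ are admissible variations in this class, since $dd^c\phi=\partial\bar u+\bar\partial u$ with $u=-i\partial\phi$ up to the sign convention for $d^c$. They stay positive for small $|t|$, so they lie in the domain where $\omega$ minimizes locally.

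The first step is to note that a local minimizer is critical in the full Aeppli class. Along every affine curve $\omega+t\psi$ with $\psi$ admissible, the function $t\mapsto\mathcal{F}_X(\omega+t\psi)$ is smooth near $t=0$, by the smooth dependence of $\rho_\omega$ established in Section~\ref{section_products}, and it has a local minimum at $t=0$. Hence its first derivative vanishes there, so $(D\mathcal{F}_X)_\omega=0$. Its second derivative is nonnegative, which gives $H_\omega(\psi,\psi)\geq0$ for every admissible $\psi$. In particular, the potential Hessian is positive semidefinite.

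The second step is to suppose $dd^c\omega^2\neq0$ and derive a contradiction. Theorem~\ref{potential_index} then says the potential Hessian has infinite negative index. So there is a direction $dd^c\phi$ with $H_\omega(dd^c\phi,dd^c\phi)<0$; explicitly, the functions $\phi_N=a\cos(Nx)$ from that proof, supported in a region where $\Psi\wedge dx\wedge d^cx<0$, work for large $N$. By the energy--volume identity~\eqref{energy_volume_identity}, $\mathcal{F}_X(\omega+t\,dd^c\phi)$ is a polynomial in $t$ of degree at most two. Its linear coefficient vanishes by criticality and its quadratic coefficient is negative, so $\mathcal{F}_X(\omega+t\,dd^c\phi)<\mathcal{F}_X(\omega)$ for all small $t\neq0$. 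These forms are positive representatives of $[\omega]_A$ arbitrarily close to $\omega$ in any reasonable topology, which contradicts local minimality. Therefore $dd^c\omega^2=0$.

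I do not expect a genuine obstacle. The only point needing care is the meaning of ``local minimizer'': the topology on positive representatives should be one, such as $C^\infty$ or $C^2$, in which $t\mapsto\omega+t\,dd^c\phi$ is continuous. With such a topology the argument uses only a single one-parameter family, and no infinite-dimensional second-order test is needed.
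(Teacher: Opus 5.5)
Your proposal is correct and follows essentially the same argument as the paper: a local minimizer is stationary and satisfies $H_\omega(dd^c\phi,dd^c\phi)\geq0$ along potential directions, which is contradicted by the negative potential directions that Theorem~\ref{potential_index} produces when $dd^c\omega^2\neq0$. Your extra observation, that $\mathcal{F}_X(\omega+t\,dd^c\phi)$ is an at most quadratic polynomial in $t$ and so strictly drops along such a direction, is a harmless strengthening rather than a different route.
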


\begin{proof}
Since every potential direction $dd^c\phi$ is tangent to the Aeppli class $[\omega]_A$, a local minimizer $\omega$ is stationary under potential variations and satisfies $H_\omega(dd^c\phi,dd^c\phi)\geq0$ for every real smooth function $\phi$. If $dd^c\omega^2\ne0$, Theorem~\ref{potential_index} produces potential directions with strictly negative second variation, which is a contradiction.
\end{proof}

A Hermitian form $\omega$ on an $n$-dimensional complex manifold is said to be astheno-K\"ahler if $dd^c\omega^{n-2}=0$. Thus, Corollary~\ref{astheno_minimum} says that every local minimizer of $\mathcal{F}_X$ on a compact K\"ahler 4-fold $(X,\eta)$ is astheno-K\"ahler. The condition $dd^c\omega^2=0$ obtained in this corollary does not by itself assert the K\"ahler condition $d\omega=0$.

\begin{corollary}\label{product_infinite_index}
Let $(S_1,\eta_1)$ and $(S_2,\eta_2)$ be compact K\"ahler surfaces, and let $\omega=\omega_1+\omega_2$ be a non-K\"ahler critical product metric for $\mathcal{F}_X$ as in Theorem~\ref{critical_products}, where $X=S_1\times S_2$ has background K\"ahler form $\eta=\eta_1+\eta_2$. Then, the Hessian of $\mathcal{F}_X$ at $\omega$ has infinite positive and negative index even when restricted to potential variations. Every metric in each of these potential variations has exterior torsion form equal to $d\omega$.
\end{corollary}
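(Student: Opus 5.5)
The plan is to apply Theorem~\ref{potential_index} directly. Since $\omega$ is Hermitian symplectic on the K\"ahler 4-fold $(X,\eta)$, it suffices to show that $dd^c\omega^2\ne0$. The statement about the torsion is immediate: every metric $\omega+tdd^c\phi$ has $d(\omega+tdd^c\phi)=d\omega$, because $ddd^c\phi=0$.

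To compute $dd^c\omega^2$, use the first identity in~\eqref{power_identities}, which gives $dd^c\omega^2=4i\,\partial\omega\wedge\bar\partial\omega$. Here $\partial\omega=\partial\omega_1+\partial\omega_2$, and $\partial\omega_i$ is a $(2,1)$-form pulled back from $S_i$. On a surface, $\partial\omega_i\wedge\bar\partial\omega_i$ has degree six and vanishes. Hence
$$
dd^c\omega^2=4i\left(\partial\omega_1\wedge\bar\partial\omega_2+\partial\omega_2\wedge\bar\partial\omega_1\right).
$$
The two summands have different bidegree types on the factors: the first is of type $(2,1)$ on $S_1$ and $(1,2)$ on $S_2$, the second the reverse. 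So they cannot cancel, and $dd^c\omega^2$ vanishes only if $\partial\omega_1\wedge\bar\partial\omega_2=0$.

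By Theorem~\ref{critical_products}, a non-K\"ahler critical product satisfies~\eqref{critical_densities}. As observed after Theorem~\ref{flow_instability}, this forces each $\omega_i$ to be non-K\"ahler. Indeed, $E_i=V_i/2>0$, so $\rho_i\ne0$. Moreover, $\rho_i$ is orthogonal to all holomorphic 2-forms, so it is not holomorphic, and therefore $\partial\omega_i=-\bar\partial\rho_i\ne0$. Choose points $x\in S_1$ and $y\in S_2$ with $\partial\omega_1(x)\ne0$ and $\bar\partial\omega_2(y)\ne0$. The latter holds because $\bar\partial\omega_2$ is the conjugate of $\partial\omega_2$. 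At $(x,y)$, the wedge of two nonzero forms pulled back from different factors is nonzero, since $\Lambda T^*_{(x,y)}X\cong\Lambda T_x^*S_1\otimes\Lambda T_y^*S_2$. Thus $dd^c\omega^2\ne0$.

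Theorem~\ref{potential_index} then gives infinite positive and negative index of the potential Hessian. This restricted quadratic form is the restriction of $H_\omega$, because the potential directions $dd^c\phi$ are admissible Aeppli-exact variations. The step needing the most care is the nonvanishing of the tensor-product wedge together with the non-cancellation by bidegree; both are elementary once the factor types are tracked.
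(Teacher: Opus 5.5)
Your proposal is correct and follows the paper's proof essentially step for step. Both reduce to $dd^c\omega^2\ne0$ via Theorem~\ref{potential_index}, write $dd^c\omega^2=4i\,\partial\omega\wedge\bar\partial\omega$, discard the pure terms for dimensional reasons, and show the two cross terms cannot cancel (distinct factor bidegrees) and are nonzero at a product point. The only difference is that you spell out why each factor $\omega_i$ is non-K\"ahler (via $\rho_i\ne0$ and its orthogonality to holomorphic 2-forms), whereas the paper simply attributes this to Theorem~\ref{critical_products}.
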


\begin{proof}
Theorem~\ref{critical_products} implies that neither surface metric $\omega_1$ nor surface metric $\omega_2$ is K\"ahler. Since the forms $\omega_i$ are real, the form $\partial\omega_i$ is nonzero at some point of $S_i$ for each $i\in\{1,2\}$. The products $\partial\omega_i\wedge\bar\partial\omega_i$ vanish because each surface $S_i$ has complex dimension two. The two cross products $\partial\omega_1\wedge\bar\partial\omega_2$ and $\partial\omega_2\wedge\bar\partial\omega_1$ have separate bidegrees $((2,1),(1,2))$ and $((1,2),(2,1))$ respectively, where the two entries record the bidegrees on $S_1$ and $S_2$.

Choose points $p_i\in S_i$ with $\partial\omega_i|_{p_i}\ne0$ for $i\in\{1,2\}$. At the product point $(p_1,p_2)$, each of the two cross products is nonzero. The cross products cannot cancel, since their separate bidegrees are different. It follows that $\partial\omega\wedge\bar\partial\omega$ is nonzero at $(p_1,p_2)$. The first identity in~\eqref{power_identities} therefore gives $dd^c\omega^2\ne0$, and Theorem~\ref{potential_index} proves the asserted infinite indices. Finally, every potential variation $\omega_\phi=\omega+dd^c\phi$ satisfies $d\omega_\phi=d\omega$, since $ddd^c\phi=0$.
\end{proof}

The tensor kept fixed in Corollary~\ref{product_infinite_index} is the real 3-form $d\omega$. When the Hermitian metric changes from $\omega$ to $\omega_\phi$, the Chern torsion tensor with an index raised by that metric need not remain fixed. The corollary therefore concerns the positive and negative second variations of $\mathcal{F}_X$ at fixed exterior torsion form; it does not impose constancy of the raised Chern torsion tensor or of its norm.

\section{Concluding Remarks}

Theorem~\ref{critical_products} classifies the product metrics which are critical for $\mathcal{F}_X$ under every Aeppli variation. Together with Theorem~\ref{existence}, this gives non-K\"ahler full critical products in the prescribed class $[\eta_1+\eta_2]_A$ exactly when the canonical bundles of both compact K\"ahler surfaces $(S_i,\eta_i)$ are smoothly trivial. These examples give a negative answer to the higher-dimensional critical-point question in~\cite[Question~1.5]{Soheil}.

The potential Hessian gives the main general restriction on local minimizers. On a compact K\"ahler 4-fold $(X,\eta)$, every local minimizer $\omega$ of $\mathcal{F}_X$ satisfies $dd^c\omega^2=0$ by Corollary~\ref{astheno_minimum}. In contrast, every non-K\"ahler critical product constructed here has infinite positive and negative Hessian indices, even under variations preserving $d\omega$, by Corollary~\ref{product_infinite_index}.

The torsion flow selects the negative direction $\xi_\omega$ and gives the exact Hessian value~\eqref{negative_hessian}. This static calculation uses only the initial velocity. The separate convergence theorem in Appendix~\ref{appendix_flow} applies to every smooth positive Hermitian initial form when the fixed K\"ahler background $(X,\eta)$ has nonnegative holomorphic bisectional curvature. The curvature hypothesis is unnecessary for the static results in the main text.

\appendix
\section{Existence and Convergence of the Torsion Flow}\label{appendix_flow}

Throughout this appendix, we fix once and for all a compact K\"ahler manifold $(X,\eta)$ of complex dimension $n\geq2$, and $\lambda>0$ is a given constant. We consider the torsion flow~\eqref{torsion_flow}. In this appendix the initial Hermitian form $\omega_0$ may be any smooth positive real $(1,1)$-form on $X$. In particular, we do not require $\omega_0$ to be closed, pluriclosed, or a representative of $[\eta]_A$. The flow equation is defined on all such initial forms, independently of the domain of the functional $\mathcal{F}_X$ in the main text.

All Hodge Laplacians, formal adjoints and norms in this appendix are taken with respect to the fixed K\"ahler form $\eta$, unless another metric is written explicitly. For example, by $C^r$ norm we mean the maximum of the supremum norms of the covariant derivatives of orders $0,\dots,r$, taken with respect to the metric $\eta$ and its Levi--Civita connection. The expression $e^{-t\Delta_\eta/2}$ denotes the Hodge heat solution operator of $\Delta_\eta$, so that for a smooth initial differential form $\alpha_0$, the evolution $\alpha(t):=e^{-t\Delta_\eta/2}\alpha_0$ solves $2\partial_t \alpha=-\Delta_\eta\alpha$ with $\alpha(0)=\alpha_0$.

\subsection{The scalar equation and short-time existence}

The K\"ahler identity $d_\eta^*=[\Lambda_\eta,d^c]$ gives
\begin{equation}\label{appendix_torsion_operator}
-\Delta_\eta\omega-dd^c\Lambda_\eta\omega
=-d_\eta^*d\omega-d\Lambda_\eta d^c\omega.
\end{equation}
Thus, the sum of the Hodge diffusion and trace terms in~\eqref{torsion_flow} is a first-order differential expression in the exterior torsion form $d\omega$. Here $d\omega$ determines $d^c\omega$ because $\omega$ is real and has bidegree $(1,1)$. The expression~\eqref{appendix_torsion_operator} vanishes for every closed real $(1,1)$-form. The logarithmic and damping terms in the full flow need not vanish at a K\"ahler form other than the prescribed background $\eta$.

For the initial form $\omega_0>0$, define the reference $(1,1)$-form by
\begin{equation}\label{appendix_reference}
\hat{\omega}(t)=\eta+e^{-\lambda t}e^{-t\Delta_\eta/2}(\omega_0-\eta).
\end{equation}
The form $\hat{\omega}(t)$ is determined by a linear equation and is not initially assumed to be positive for all time. It satisfies $\hat{\omega}(0)=\omega_0$ and
$$
\frac{\partial \hat{\omega}}{\partial t}=-\frac12\Delta_\eta \hat{\omega}-\lambda(\hat{\omega}-\eta).
$$

\begin{proposition}\label{appendix_local_existence}
Every smooth positive Hermitian form $\omega_0$ on $(X,\eta)$ is the initial value of a unique maximal smooth positive solution of~\eqref{torsion_flow}. On the interval of existence, that solution has the representation
\begin{equation}\label{appendix_scalar_equation}
\begin{cases}
\omega(t)=\hat{\omega}(t)+dd^c u(t),\\[3pt]
\displaystyle\frac{\partial u}{\partial t}
=\frac12\log\frac{(\hat{\omega}(t)+dd^c u)^n}{\eta^n}
+\frac12\Lambda_\eta(\eta-\hat{\omega}(t))-\lambda u,\\[5pt]
u(0)=0,
\end{cases}
\end{equation}
where $u$ is a smooth real function on $X$ depending on $t$, and $\hat{\omega}(t)$ is the form in~\eqref{appendix_reference}.
\end{proposition}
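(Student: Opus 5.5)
The plan is to reduce the tensorial flow~\eqref{torsion_flow} to the scalar parabolic Monge--Amp\`ere type equation~\eqref{appendix_scalar_equation}. We then get existence from standard parabolic theory for that scalar equation, and uniqueness by showing that every smooth positive solution of~\eqref{torsion_flow} admits such a scalar representation.

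\emph{Step 1: a scalar solution gives a flow solution.} Suppose $u$ solves~\eqref{appendix_scalar_equation} on $[0,T)$ and $\omega=\hat\omega+dd^cu>0$. Differentiating in time, and using the linear equation for $\hat\omega$, gives
$$
\partial_t\omega=-\tfrac12\Delta_\eta\hat\omega-\lambda(\hat\omega-\eta)+\tfrac12dd^c\log\frac{\omega^n}{\eta^n}-\tfrac12dd^c\Lambda_\eta\hat\omega-\lambda\,dd^cu .
$$
Here I use that $\Lambda_\eta\eta=n$ is constant, so $dd^c\Lambda_\eta\eta=0$. I then compare this with the right-hand side of~\eqref{torsion_flow} evaluated at $\omega=\hat\omega+dd^cu$. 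The damping and logarithmic terms agree. The remaining discrepancy is $-\tfrac12(\Delta_\eta+dd^c\Lambda_\eta)dd^cu$. By~\eqref{appendix_torsion_operator} this operator depends only on $d(dd^cu)=0$, so it vanishes. Hence $\omega$ solves~\eqref{torsion_flow} with $\omega(0)=\hat\omega(0)=\omega_0$.

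\emph{Step 2: short-time existence for the scalar equation.} The right-hand side of~\eqref{appendix_scalar_equation} is $\tfrac12\log\det$ of $\hat\omega(t)+dd^cu$, plus smooth known terms, minus $\lambda u$. Its linearization in $u$ is $\tfrac12\Delta_{\omega}u$ up to normalization, where $\Delta_\omega$ is the complex Laplacian $\operatorname{tr}_\omega dd^c$. This operator is strictly elliptic as long as $\hat\omega+dd^cu>0$. At $t=0$ we have $u=0$ and $\hat\omega(0)=\omega_0>0$, and $\hat\omega(t)$ depends smoothly on $t$. So the standard inverse function theorem argument in parabolic H\"older spaces gives a unique smooth solution on a short interval with $\hat\omega+dd^cu>0$. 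Bootstrapping then gives smoothness. The maximal interval is the supremum of the times to which a smooth solution with positive $\omega$ extends, and uniqueness of the scalar problem makes these solutions patch together.

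\emph{Step 3: uniqueness and the representation.} Let $\omega(t)$ be any smooth positive solution of~\eqref{torsion_flow}. Define $u$ pointwise in $x$ by the linear ODE
$$
\partial_tu=\tfrac12\log(\omega^n/\eta^n)+\tfrac12\Lambda_\eta(\eta-\hat\omega)-\lambda u,\qquad u(0)=0,
$$
with $\omega$ treated as given data. Set $z=\omega-\hat\omega-dd^cu$. Subtracting the equations, and using~\eqref{appendix_torsion_operator} to write $\Delta_\eta+dd^c\Lambda_\eta$ as $d_\eta^*d+d\Lambda_\eta d^c$ (which kills $dd^cu$), I expect
$$
\partial_tz=-\tfrac12(d_\eta^*d+d\Lambda_\eta d^c)z-\lambda z,\qquad z(0)=0.
$$
Applying $d$ gives $\partial_t dz=-\tfrac12\Delta_\eta dz-\lambda dz$ with $dz(0)=0$, so $dz\equiv0$ by uniqueness for the Hodge heat equation. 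Since $z$ is real of type $(1,1)$, this also gives $d^cz=0$. The equation then reduces to $\partial_tz=-\lambda z$, so $z\equiv0$. Therefore $\omega=\hat\omega+dd^cu$, and $u$ solves~\eqref{appendix_scalar_equation}. Uniqueness of the scalar solution from Step~2 now gives uniqueness of the flow. The representation holds on the whole maximal interval, since the construction works for any smooth positive solution.

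The main obstacle is Step~2. The equation is fully nonlinear with a time-dependent reference form $\hat\omega(t)$, and $\hat\omega(t)$ need not stay positive; the argument must carry the positivity of $\hat\omega+dd^cu$, not of $\hat\omega$, as the open condition defining the solution space. I would handle this by the linearization and implicit function argument in $C^{2+\alpha,1+\alpha/2}$ around the constant-in-time approximate solution $u\equiv0$ on a small time interval. Step~3 hinges on the cancellation identity~\eqref{appendix_torsion_operator}, which I take as given.
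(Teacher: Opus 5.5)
Your Steps 1 and 2 match the paper's argument. The identity $\Delta_\eta dd^cu=-dd^c\Lambda_\eta dd^cu$, which is \eqref{appendix_torsion_operator} applied to the closed real $(1,1)$-form $dd^cu$, turns the scalar equation into \eqref{torsion_flow}. Short-time existence and uniqueness for the scalar equation then come from standard parabolic theory on the open set where $\hat\omega+dd^cu>0$.

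Your Step 3 is correct, but it reaches the representation by a different route. The paper takes $\beta=\omega-\hat\omega$, which solves the damped Hodge heat equation with zero initial value and $dd^c$-exact forcing $\tfrac12dd^c\bigl(\log(\omega^n/\eta^n)-\Lambda_\eta\omega\bigr)$. It uses commutation of the heat operator with $dd^c$ to write $\beta=dd^cw$, and then corrects $w$ by a function of time to get the scalar equation with $u(0)=0$.

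You instead define $u$ by integrating the scalar equation in time with $\omega$ frozen as data, so the equation and $u(0)=0$ hold by construction. You then show the defect $z=\omega-\hat\omega-dd^cu$ vanishes. Since $dz$ solves the homogeneous damped heat equation with zero data, it is zero. Reality and bidegree then give $d^cz=0$, which leaves $\partial_tz=-\lambda z$, so $z\equiv0$.

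Your route needs only energy uniqueness for the homogeneous heat equation together with \eqref{appendix_torsion_operator}. It avoids the Duhamel-type representation and the bookkeeping of the time-dependent constant, which arises from $\Lambda_\eta\eta=n$ and $\Delta_\eta=-\Lambda_\eta dd^c$ on functions. The paper's route is shorter to state and exhibits the potential explicitly up to that constant.

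One minor technical point in Step 2: $u\equiv0$ is not an approximate solution at $t=0$. In general $\partial_tu(0)=\tfrac12\log(\omega_0^n/\eta^n)+\tfrac12\Lambda_\eta(\eta-\omega_0)$ is nonzero, so $\Phi(0)$ is not small even on short intervals. The implicit-function argument should instead be centred at something like $t\,\partial_tu(0)$. This is routine and belongs to the standard local theory that the paper also simply cites.
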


\begin{proof}
The scalar equation in~\eqref{appendix_scalar_equation} is strictly parabolic while $$\omega=\hat{\omega}+dd^c u>0,$$ since its linearization in a real function $v$ has principal part $\Lambda_\omega dd^c v/2$. Standard local parabolic theory gives a unique short-time solution with $u(0)=0$.

The K\"ahler identity $$\Delta_\eta dd^c u=-dd^c\Lambda_\eta dd^c u$$ shows that applying $dd^c$ to the scalar equation and adding the equation for $\hat{\omega}$ gives~\eqref{torsion_flow}. Conversely, the difference between any positive solution $\omega(t)$ and $\hat{\omega}(t)$ has zero initial value and solves a Hodge heat equation with $dd^c$-exact forcing. Commutation of the heat operator with $dd^c$ therefore gives $\omega-\hat{\omega}=dd^c u$. Adjusting the additive function of time in $u$ gives the stated scalar equation and $u(0)=0$. Scalar uniqueness proves uniqueness for the form equation, and local continuation gives the maximal positive solution.
\end{proof}

\subsection{Evolution of the torsion}

\begin{proposition}\label{appendix_torsion}
For every smooth positive solution of~\eqref{torsion_flow} with initial form $\omega_0$, it holds that
\begin{equation}\label{appendix_torsion_heat}
d\omega(t)=e^{-\lambda t}e^{-t\Delta_\eta/2}d\omega_0.
\end{equation}
In particular, K\"ahler initial forms remain K\"ahler.
\end{proposition}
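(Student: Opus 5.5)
The plan is to show that the exterior torsion form $\tau(t)=d\omega(t)$ satisfies the damped Hodge heat equation $\partial_t\tau=-\frac12\Delta_\eta\tau-\lambda\tau$ with $\tau(0)=d\omega_0$. We then identify $\tau$ with the explicit heat solution by uniqueness for this linear parabolic system on the compact manifold $X$.

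\emph{Step 1: the equation for $\tau$.} Apply $d$ to~\eqref{torsion_flow}. Since $\omega(t)$ is smooth in space and time, $d$ commutes with $\partial_t$. The $dd^c$ term is killed by $d$, and the damping term contributes $-\lambda d\omega$ because $d\eta=0$. For the diffusion term, $\Delta_\eta=dd^*_\eta+d^*_\eta d$ commutes with $d$, since both compositions reduce to $dd^*_\eta d$. This yields $\partial_t\tau=-\frac12\Delta_\eta\tau-\lambda\tau$, exactly as already noted in Section~\ref{section_flow}. Here the solution exists on the maximal interval $[0,T)$ from Proposition~\ref{appendix_local_existence}.

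\emph{Step 2: uniqueness.} Set $\beta(t)=e^{-\lambda t}e^{-t\Delta_\eta/2}d\omega_0$, which solves the same equation with the same initial value. The difference $\gamma=\tau-\beta$ is a smooth solution of $\partial_t\gamma=-\frac12\Delta_\eta\gamma-\lambda\gamma$ with $\gamma(0)=0$. We use an energy argument with the fixed metric $\eta$:
$$
\frac{d}{dt}\|\gamma\|_{L^2(\eta)}^2=-\|d\gamma\|_{L^2(\eta)}^2-\|d_\eta^*\gamma\|_{L^2(\eta)}^2-2\lambda\|\gamma\|_{L^2(\eta)}^2\leq0.
$$
This uses the self-adjointness and nonnegativity of $\Delta_\eta$ on the closed manifold $X$, together with reality of $\gamma$. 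Hence $\gamma\equiv0$ on $[0,T)$, which gives~\eqref{appendix_torsion_heat}. Alternatively, one can note directly from the representation $\omega=\hat\omega+dd^cu$ of Proposition~\ref{appendix_local_existence} that $d\omega=d\hat\omega$. Because $d\eta=0$ and $d$ commutes with the heat semigroup, $d\hat\omega=e^{-\lambda t}e^{-t\Delta_\eta/2}d\omega_0$. This gives a second, purely algebraic proof, and I would present it as the main argument, with the energy estimate as a check.

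\emph{Step 3: K\"ahler initial forms.} If $d\omega_0=0$, formula~\eqref{appendix_torsion_heat} gives $d\omega(t)=0$ for all $t$ in the interval of existence.

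The only point needing care is the commutation $d\,e^{-t\Delta_\eta/2}=e^{-t\Delta_\eta/2}d$. I would justify it by applying the energy uniqueness above to $d$ of the heat solution: $d$ of a heat solution is again a heat solution with initial value $d\alpha_0$. Equivalently, it follows from the spectral decomposition of $\Delta_\eta$, which commutes with $d$. I expect no serious obstacle.
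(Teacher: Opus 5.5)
Your proof is correct, and Steps 1--2 are the paper's argument. The paper applies $d$ to \eqref{torsion_flow}, obtains $\partial_t d\omega=-\frac12\Delta_\eta d\omega-\lambda d\omega$, and reads off \eqref{appendix_torsion_heat}. It leaves implicit the uniqueness for this linear damped heat equation, which your $L^2(\eta)$ energy identity supplies.

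The route you say you would make the main argument, $d\omega=d\hat\omega$ from $\omega=\hat\omega+dd^cu$, is also valid. Uniqueness in Proposition~\ref{appendix_local_existence} makes every smooth positive solution a restriction of the maximal one, so the representation applies. However, it is not more elementary. The step in Proposition~\ref{appendix_local_existence} showing that $\omega-\hat\omega$ is $dd^c$-exact is itself a Duhamel/uniqueness argument for the same heat operator. It also makes the torsion formula depend on the scalar reduction and the parabolic theory behind it.

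The direct route needs only $d\Delta_\eta=\Delta_\eta d$ and linear uniqueness. It is self-contained and works on any interval carrying a smooth positive solution, so it is the better choice for the main argument.
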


\begin{proof}
Applying $d$ to~\eqref{torsion_flow} gives $$\partial_t d\omega=-\frac{1}{2}\Delta_\eta d\omega-\lambda d\omega,$$ which proves~\eqref{appendix_torsion_heat} and preservation of closedness.
\end{proof}

\subsection{A positive reference form and global convergence}

\begin{theorem}\label{appendix_reference_convergence}
Let $\omega_0$ be any smooth positive Hermitian form on a compact K\"ahler manifold $(X,\eta)$ of dimension $n$. Suppose that the form $\hat{\omega}(t)$ in~\eqref{appendix_reference} satisfies $\hat{\omega}(t)\geq c\eta$ for every $t\geq0$, where $c>0$ is independent of $t$. Then, the solution of~\eqref{torsion_flow} with initial form $\omega_0$ exists for all $t\geq0$, and there is a constant $C>0$ such that
\begin{equation}\label{appendix_uniform_metric}
C^{-1}\eta\leq\omega(t)\leq C\eta\qquad(t\geq0).
\end{equation}
For every nonnegative integer $r$ and every $0<\gamma<\lambda$, there is a constant $C>0$, depending on $r$ and $\gamma$ as well as the fixed data, such that the estimate 
\begin{equation}\label{appendix_convergence_rate}
\|\omega(t)-\eta\|_{C^r}\leq Ce^{-\gamma t}
\end{equation}
of $C^r$ norms holds for all $t\geq1$.
\end{theorem}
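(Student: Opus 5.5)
This is a plan to prove Theorem~\ref{appendix_reference_convergence} by working with the scalar equation~\eqref{appendix_scalar_equation} of Proposition~\ref{appendix_local_existence}. The idea is to obtain a priori estimates for $u$ on any finite interval $[0,T)$ of positive existence that do not depend on $T$. These estimates give long-time existence and~\eqref{appendix_uniform_metric}; exponential decay then follows from the damping term $-\lambda u$ and the explicit decay of $\hat\omega-\eta$.

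\textbf{Reference form and $C^0$ estimate.} First I record the behaviour of the reference form. By~\eqref{appendix_reference}, $\hat\omega(t)-\eta=e^{-\lambda t}e^{-t\Delta_\eta/2}(\omega_0-\eta)$. Every $C^r$ norm of this form is bounded by $C_re^{-\lambda t}$, because the heat semigroup is bounded on every $C^r$. Consequently the forcing term $f(t)=\frac12\Lambda_\eta(\eta-\hat\omega)$ satisfies $\|f\|_{C^r}\le C_re^{-\lambda t}$. Together with the hypothesis $\hat\omega\ge c\eta$, this also gives $c\eta\le\hat\omega\le C\eta$.

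Next comes the $C^0$ bound, by the maximum principle. At a spatial maximum of $u$ we have $dd^cu\le0$, so $(\hat\omega+dd^cu)^n\le\hat\omega^n\le C\eta^n$. Hence $\partial_t u_{\max}\le C'e^{-\lambda t}+\frac12\log C-\lambda u_{\max}$. A minimum gives the symmetric bound, using $\hat\omega\ge c\eta$. Together these give $|u|\le C$ uniformly in time.

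To obtain decay rather than mere boundedness, I will apply the same argument to $\dot u=\partial_tu$. Differentiating the equation gives
$$\partial_t\dot u=\tfrac12\Lambda_\omega(dd^c\dot u+\partial_t\hat\omega)+\partial_tf-\lambda\dot u.$$
The term $\partial_t\hat\omega$ is $O(e^{-\lambda t})$ in $C^0$. Controlling $\Lambda_\omega$ of it, however, requires a bound on $\omega^{-1}$, which is circular at this stage. I will therefore postpone the decay of $\dot u$ until after the second-order estimate.

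\textbf{$C^2$ estimate (the main obstacle).} The key step is a second-order estimate of the form $\operatorname{tr}_\eta\omega\le C$. I plan an Aubin--Yau/Cherrier type computation for $\log\operatorname{tr}_\eta\omega-Au$, in the style of Tosatti--Weinkove for the Hermitian Monge--Amp\`ere equation, with $\hat\omega(t)$ as the Hermitian reference. Since $\eta$ is K\"ahler and the reference $\hat\omega$ is uniformly bounded in $C^r$ with $\hat\omega\ge c\eta$, the torsion terms of $\hat\omega$ are bounded. The parabolic version goes through with constants independent of $t$. The time derivative contributes $\operatorname{tr}_\eta(\partial_t\hat\omega)$ and $\dot u$-type terms, both of which are bounded.

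With the upper bound in hand, I will get the lower bound from the volume form. The equation gives $\omega^n=\eta^n\exp\bigl(2(\dot u-f+\lambda u)\bigr)$. For this I still need a uniform bound on $\dot u$, which I will derive by the maximum principle applied to $\dot u$ after controlling the term $\Lambda_\omega\partial_t\hat\omega$. I expect this coupling between the $\dot u$ bound and the $C^2$ bound to be the hardest point. If it does not close directly, I will use the quantity $\dot u+Bu$ or $\dot u-Au$ and exploit the uniform $C^0$ bound on $u$. Once $\operatorname{tr}_\eta\omega\le C$ and $\omega^n\ge C^{-1}\eta^n$ are both known, they yield~\eqref{appendix_uniform_metric}.

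\textbf{Higher regularity and decay.} Higher regularity follows from an Evans--Krylov or complex $C^{2,\alpha}$ estimate together with Schauder bootstrapping, uniformly on unit time windows. This gives long-time existence, because positivity is preserved by the uniform lower bound. For the decay estimate, I will linearize around $u=0$ and $\hat\omega=\eta$. The equation has the form $\partial_tu=\frac12\Delta^{\mathbb C}_{\omega}u-\lambda u+O(e^{-\lambda t})$, where the nonlinear correction is controlled by the uniform bounds. The maximum principle applied to $e^{\gamma t}u$, using $\log\det$ concavity with a lower-order term to handle the nonlinearity, gives $|u|\le Ce^{-\gamma t}$. Interpolating this decay against the uniform $C^{r+2}$ bounds, and combining it with the decay of $\hat\omega-\eta$, yields~\eqref{appendix_convergence_rate}. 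The loss from $\lambda$ to any $\gamma<\lambda$ absorbs the polynomial factors and the interpolation.
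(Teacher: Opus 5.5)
Your outline (decay of $\hat\omega-\eta$, a $C^0$ bound, a time-derivative bound, a trace estimate, higher regularity, interpolation) has the same components as the paper's. However, the step that makes the estimates close is not established, and the order you propose is circular. Your trace estimate for $\log\operatorname{tr}_\eta\omega-Au$ produces the term $-A\dot u$ from $(\partial_t-P_t)u$, so it needs a lower bound on $\dot u$. Turning the resulting bound on $\operatorname{tr}_\omega\hat\omega$ into a bound on $\operatorname{tr}_\eta\omega$ also needs an upper bound on $\omega^n/\eta^n=e^{2(\dot u-f+\lambda u)}$. So the trace estimate already uses two-sided bounds on $\dot u$, which you postpone until after it. Conversely, an upper bound on $\operatorname{tr}_\eta\omega$ does not control $\Lambda_\omega\partial_t\hat\omega$ from below, because $\operatorname{tr}_\omega\eta$ is then only bounded by $Ce^{-2\dot u}$. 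Neither order closes. Your fallback ``use $\dot u\pm Au$'' names the right quantity but not the mechanism that makes it work.

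That mechanism is the missing idea: the bound on $w=u_t$ can be proved first, using nothing about $\omega$ beyond positivity. With $P_t=\frac12\Lambda_\omega dd^c$ one has $P_tu=\frac12(n-\Lambda_\omega\hat\omega)$, hence
\[
(\partial_t-P_t)(w\mp Au)=\tfrac12\Lambda_\omega(\hat\omega_t\mp A\hat\omega)-(\lambda\pm A)w\pm\tfrac n2A+f_t .
\]
Since $\hat\omega\ge c\eta$ and $\hat\omega_t$ is bounded, fix $A>\lambda$ with $\hat\omega_t-A\hat\omega\le0$ and $\hat\omega_t+A\hat\omega\ge c_1\eta$. The $\Lambda_\omega$-terms then have a definite sign for every positive $\omega$, with no need for bounds on $\omega^{-1}$. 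The maximum principle for $w-Au$, together with the $C^0$ bound on $u$, gives $w\le C$. For the lower bound, the equation and the arithmetic--geometric mean inequality give $\Lambda_\omega\eta\ge n(\eta^n/\omega^n)^{1/n}\ge c_2e^{-2w/n}$. So at a very negative minimum of $w+Au$ the right-hand side is at least $c_3e^{-2w/n}+(A-\lambda)w-C>0$, contradicting the minimum principle. This yields $C^{-1}\le\omega^n/\eta^n\le C$ before any second-order estimate, and only then does a Hermitian trace estimate close. Because $d\omega=d\hat\omega\neq0$, plain $\log\operatorname{tr}-Au$ does not absorb the torsion gradient term. You need a Tosatti--Weinkove or Phong--Sturm type modification, such as the paper's $\log\Lambda_{\hat\omega}\omega+\exp(B(D-2u))$ following Gill. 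Finally, your $C^0$ step already gives decay if you do not discard it. At an extremum of $u$ the forcing is $\frac12\log(\hat\omega^n/\eta^n)+f=O(e^{-2\lambda t})$, since the linear terms cancel. This gives $\|u\|_{C^0}\le Ce^{-\lambda t}$ directly, with no linearization needed before interpolating.
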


\begin{proof}
The Hodge heat formula~\eqref{appendix_reference} gives uniform bounds for all space and time derivatives of $\hat{\omega}(t)$, and $\hat{\omega}(t)-\eta=O(e^{-\lambda t})$ in every smooth norm. Together with the assumed lower bound, these estimates imply that $\hat{\omega}$ is uniformly equivalent to $\eta$. We use the scalar representation~\eqref{appendix_scalar_equation} and put $f=-\Lambda_\eta(\hat{\omega}-\eta)/2$.

At a spatial maximum of $u$, the inequality $dd^c u\leq0$ gives $\omega\leq \hat{\omega}$. At a minimum of $u$, the inequality is reversed. The forcing term in the resulting scalar comparison is
$$
\frac12\log\frac{\hat{\omega}(t)^n}{\eta^n}-\frac12\Lambda_\eta(\hat{\omega}(t)-\eta)
=O(e^{-2\lambda t}).
$$
The order $e^{-2\lambda t}$ follows because the linear term of the logarithmic determinant is $\Lambda_\eta(\hat{\omega}-\eta)$ and cancels the trace term. Applying the scalar maximum principle to $u_t+\lambda u$ at the extrema of $u$, with $u(0)=0$, gives
\begin{equation}\label{appendix_potential_decay}
\|u(t)\|_{C^0}\leq Ce^{-\lambda t}.
\end{equation}
The constant absorbs the bounded forcing on any initial compact time interval.

We next bound $u_t$. In this proof only, write $w=u_t$, and let $P_t$ be the scalar differential operator $P_tv=\Lambda_{\omega(t)}dd^c v/2$. Thus, $P_t$ is the complex Laplacian of the positive Hermitian form $\omega(t)$, with the sign for which $P_tv\leq0$ at a maximum of $v$. For a constant $A>0$, differentiation of~\eqref{appendix_scalar_equation} gives
\begin{equation}\label{appendix_time_derivative}
\begin{aligned}
(\partial_t-P_t)(w-Au)
&=\frac12\Lambda_\omega(\hat{\omega}_t-A\hat{\omega})-(\lambda+A)w+\frac{n}{2}A+f_t,\\
(\partial_t-P_t)(w+Au)
&=\frac12\Lambda_\omega(\hat{\omega}_t+A\hat{\omega})+(A-\lambda)w-\frac{n}{2}A+f_t.
\end{aligned}
\end{equation}
Choose $A>\lambda$ sufficiently large that $\hat{\omega}_t-A\hat{\omega}\leq0$ and $\hat{\omega}_t+A\hat{\omega}\geq c_1\eta$ for a constant $c_1>0$. The first equation in~\eqref{appendix_time_derivative}, together with the bound for $u$, bounds $w$ above by the maximum principle. For the lower bound, the scalar equation and the arithmetic--geometric mean inequality give
$$
\Lambda_\omega\eta
\geq n\left(\frac{\eta^n}{\omega^n}\right)^{1/n}
\geq c_2e^{-2w/n}.
$$
At a sufficiently negative space-time minimum of $w+Au$, the right-hand side of the second equation in~\eqref{appendix_time_derivative} is at least $c_3e^{-2w/n}+(A-\lambda)w-C>0$. This contradicts the parabolic minimum principle. Therefore, $w$ is also bounded below. The scalar equation now gives
\begin{equation}\label{appendix_determinant_bounds}
C^{-1}\leq\frac{\omega(t)^n}{\eta^n}\leq C.
\end{equation}

For the second-order estimate, set $\varphi=2u$ and $S=\Lambda_{\hat{\omega}}\omega$. The equation for $\varphi$ is
$$
\varphi_t=\log\frac{(\hat{\omega}+i\partial\bar\partial\varphi)^n}{\hat{\omega}^n}
-\log\frac{\eta^n}{\hat{\omega}^n}+2f-\lambda\varphi.
$$
Apply the Hermitian trace estimate of~\cite[Section~3]{Gill} with test function $\log S+\exp(B(D-\varphi))$, where $D$ bounds $\varphi$ above and $B>0$ is sufficiently large. The additional term in $S_t$ caused by the time dependence of $\hat{\omega}$ is
$$
\sum_{i,j=1}^n(\partial_t \hat{\omega}^{i\bar j})(\omega_{i\bar j}-\hat{\omega}_{i\bar j}),
$$
where $\hat{\omega}^{1\bar 1},\hat{\omega}^{1\bar 2},\dots,\hat{\omega}^{n\bar n}$ denote the inverse metric coefficients. This term is bounded in absolute value by $C(S+n)$. The determinant bounds give $S\geq c>0$, so its contribution to $\partial_t\log S$ is bounded. The damping contributes the bounded-below term $\lambda(S-n)/S$ to $(P_t-\partial_t)\log S$. The remaining reference coefficients and forcing derivatives are uniformly bounded, and $\varphi_t$ is bounded. The exponential term absorbs the torsion gradient terms in the trace estimate. The maximum principle therefore gives $S\leq C$, which together with~\eqref{appendix_determinant_bounds} proves~\eqref{appendix_uniform_metric}.

The metric bounds control $\Delta_\eta u$, so fixed-background elliptic estimates give a gradient bound. Parabolic H\"older and Schauder estimates then bound all derivatives on positive-time strips, as in~\cite[Sections~4--5]{Gill}. These uniform estimates give continuation for all time. Interpolation with~\eqref{appendix_potential_decay} gives $$\|dd^c u(t)\|_{C^r}\leq Ce^{-\gamma t}$$ for every $0<\gamma<\lambda$, where $C$ may depend on $r$ and $\gamma$. Since $\hat{\omega}(t)-\eta=O(e^{-\lambda t})$ in every smooth norm, formula~\eqref{appendix_scalar_equation} proves~\eqref{appendix_convergence_rate}.
\end{proof}

\subsection{Nonnegative holomorphic bisectional curvature}

We use the K\"ahler curvature convention in which the Fubini--Study metric has positive holomorphic bisectional curvature. For nonzero $(1,0)$-vectors $v,w$, the holomorphic bisectional curvature of $\eta$ in the directions $v,w$ is the curvature component $R_\eta(v,\bar v,w,\bar w)$ divided by $|v|_\eta^2|w|_\eta^2$. Nonnegative holomorphic bisectional curvature means that this quotient is nonnegative for every pair of directions.

\begin{theorem}\label{appendix_nonnegative_bisectional}
Suppose that the fixed compact K\"ahler background $(X,\eta)$ has nonnegative holomorphic bisectional curvature. Then, for every $\lambda>0$ and every smooth positive Hermitian initial form $\omega_0$, the solution of~\eqref{torsion_flow} exists for all time and converges smoothly to $\eta$. The conclusions~\eqref{appendix_uniform_metric} and~\eqref{appendix_convergence_rate} hold.
\end{theorem}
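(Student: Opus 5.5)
The plan is to reduce the statement to Theorem~\ref{appendix_reference_convergence}. It suffices to show that, under nonnegative holomorphic bisectional curvature, the reference form $\hat\omega(t)=\eta+e^{-\lambda t}e^{-t\Delta_\eta/2}(\omega_0-\eta)$ satisfies a uniform lower bound $\hat\omega(t)\geq c\eta$ for all $t\geq0$. Once this holds, Theorem~\ref{appendix_reference_convergence} gives global existence, the uniform equivalence~\eqref{appendix_uniform_metric} and the exponential $C^r$ convergence~\eqref{appendix_convergence_rate}. Uniqueness and maximality come from Proposition~\ref{appendix_local_existence}.

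First rewrite $\hat\omega(t)=(1-e^{-\lambda t})\eta+e^{-\lambda t}\beta(t)$, where $\beta(t)=e^{-t\Delta_\eta/2}\omega_0$. Here I use that $\eta$ is harmonic, so $e^{-t\Delta_\eta/2}\eta=\eta$. The main step is a positivity-preservation statement for the Hodge heat flow on real $(1,1)$-forms: if $\omega_0\geq c_0\eta$ with $c_0>0$ the smallest eigenvalue of $\omega_0$ relative to $\eta$, then $\beta(t)\geq c_0\eta$ for all $t$. Given this, $\hat\omega(t)\geq\big((1-e^{-\lambda t})+c_0e^{-\lambda t}\big)\eta\geq\min(1,c_0)\eta$, which is the needed bound.

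To prove positivity preservation, I would apply the Weitzenb\"ock formula for real $(1,1)$-forms on a K\"ahler manifold, $\Delta_\eta\alpha=\nabla^*\nabla\alpha+\mathcal{R}(\alpha)$. In a unitary frame diagonalizing $\alpha$ at a point, the curvature term acting on a $(1,1)$-form has the Bochner shape
\[
\mathcal{R}(\alpha)_{i\bar i}=\sum_k R_{i\bar i k\bar k}(\alpha_{i\bar i}-\alpha_{k\bar k}),
\]
the standard formula (as in Bando--Mok type arguments) that underlies preservation of nonnegativity of holomorphic bisectional curvature-type quantities. Then apply Hamilton's tensor maximum principle to $\gamma=\beta-c_0\eta$, which solves the same heat equation because $\Delta_\eta\eta=0$. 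At a first point and time where $\gamma$ acquires a null eigenvector $e_1$ with $\gamma\geq0$, the term $\nabla^*\nabla$ has the favorable sign, and the reaction term gives $\sum_kR_{1\bar1k\bar k}(\gamma_{1\bar1}-\gamma_{k\bar k})=-\sum_kR_{1\bar1k\bar k}\gamma_{k\bar k}\leq0$ inside $-\mathcal R$. Nonnegative bisectional curvature, i.e.\ $R_{1\bar1k\bar k}\geq0$, makes the null-eigenvalue condition preserved. The usual $\epsilon e^{t}$ perturbation handles strictness.

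The expected obstacle is getting the signs and normalization of the Weitzenb\"ock curvature term exactly right, including the Ricci contributions to the diagonal entries, and checking that only bisectional curvature (not a stronger condition) appears in the reaction term at a null eigenvector. I would verify this first on a flat torus, where $\mathcal R=0$, and on $\mathbb P^n$, where $\eta$ itself must be a fixed point. Off-diagonal curvature terms vanish in the null-eigenvector pairing once $e_1$ is an eigenvector of $\gamma$, so only the diagonal bisectional sums survive.
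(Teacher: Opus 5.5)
Your proposal is correct and follows the paper's route. Both arguments reduce the theorem to Theorem~\ref{appendix_reference_convergence} by proving $\hat\omega(t)\geq c\eta$ with the tensor maximum principle, using the K\"ahler Weitzenb\"ock reaction term at a null eigenvector, which is $\sum_k R_{1\bar1k\bar k}\gamma_{k\bar k}\geq0$ under nonnegative bisectional curvature. The only difference is where the damping goes: you split it off and apply the maximum principle to the pure Hodge heat flow $\beta(t)$, so you need the $\epsilon$-perturbation for strictness, while the paper runs the first-contact argument on $\hat\omega-c\eta$ directly with $c<1$, where the damping term $\lambda(1-c)>0$ already makes the reaction strictly positive.
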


\begin{proof}
Let $\hat{\omega}(t)$ be the reference form~\eqref{appendix_reference}, and choose $0<c<1$ such that $\omega_0>c\eta$. Suppose that $\hat{\omega}-c\eta$ has a null vector for the first time. At a point of first contact, diagonalize $\hat{\omega}$ in an $\eta$-unitary frame, with eigenvalues $a_1=c$ and $a_j\geq c$ for every $j$.

By the K\"ahler Weitzenb\"ock formula, the curvature and damping reaction in the first diagonal component of $\partial_t \hat{\omega}=-\Delta_\eta \hat{\omega}/2-\lambda(\hat{\omega}-\eta)$ is exactly
$$
\sum_{i=1}^nR_{1\bar1i\bar i}(a_i-c)+\lambda(1-c)>0;
$$
see~\cite[Lemma~2.1]{NiNiu} for the heat-operator normalization. The inequality follows from nonnegative holomorphic bisectional curvature and $\lambda>0$. At the first contact the rough diffusion term is nonnegative in the null direction. The strictly positive reaction therefore contradicts the tensor maximum principle. Hence, we have that $\hat{\omega}(t)\geq c\eta$ for all $t\geq0$, and Theorem~\ref{appendix_reference_convergence} proves the assertion.
\end{proof}

In particular, Theorem~\ref{appendix_nonnegative_bisectional} applies to complex projective space with its Fubini--Study form, to flat complex tori equipped with a flat K\"ahler form, and to products of these backgrounds with the product K\"ahler form. The curvature hypothesis concerns the fixed form $\eta$. The theorem does not assert that the evolving Hermitian metric $\omega(t)$ has nonnegative bisectional curvature at every time.

%\section*{Acknowledgements} 

%This research was completed while the author was studying at the Mathematics Institute of the University of Warwick. The author would therefore like to thank the University of Warwick for its hospitality.

%The author is in debt to the Russian mathematical society, especially, to all the professors teaching the "Math in Moscow" program, and most importantly, to the author's supervisor Alexander Petrovich Veselov at Loughborough University, as they cultivated the author's mathematical literacy and maturity.

\section*{Statements and Declarations}

\noindent\textbf{Funding.} No funding was received to assist with the preparation of this manuscript.

\noindent\textbf{Competing interests.} The author declares no relevant financial or non-financial interests.

\noindent\textbf{Data availability.} Data sharing is not applicable to this article.

\noindent\textbf{Use of generative AI.} During the preparation of this manuscript, the author used OpenAI’s ChatGPT to help handle minor details and grammatical issues. All suggested changes were reviewed by the author, who takes full responsibility for the mathematical accuracy and the final content of this article.

\singlespacing

\end{document}